\numberwithin{equation}{section}
\newtheorem{theorem}{Theorem}[section]
\newtheorem{lemma}[theorem]{Lemma}
\newtheorem{remark}[theorem]{Remark}
\newtheorem{Corollary}[theorem]{Corollary}
\newtheorem{Prop}{Proposition}[section]
\def\qed{\hfill$\square$\par \bigskip}
\newcommand{\R}{\mathbb{R}}
\newcommand{\norm}[1]{\|#1\|}
\newcommand{\abs}[1]{\left\vert#1\right\vert}
\newcommand{\para}[1]{\left(#1\right)}
\def\beq{\begin{equation}}
\def\eeq{\end{equation}}
\renewcommand{\leq}{\leqslant}
\renewcommand{\geq}{\geqslant}
\newcommand{\bea}{\begin{eqnarray}}
\newcommand{\eea}{\end{eqnarray}}
\newcommand{\beas}{\begin{eqnarray*}}
\newcommand{\eeas}{\end{eqnarray*}}
\newcommand{\bel}{\begin{equation} \label}
\newcommand{\ee}{\end{equation}}
\newcommand{\pd}{\partial}
\newcommand{\cB}{\mathcal{B}}
\newcommand{\cC}{\mathcal{C}}
\newcommand{\cH}{\mathcal{H}}
\newcommand{\supp}{\mathrm{supp}}
\newcommand{\re}{\textrm{Re}}
\newcommand{\im}{\textrm{Im}}
\begin{document}
\title[H\"older stably determining the time-dependent electromagnetic potential]{H\"older stably determining the time-dependent electromagnetic potential of the Schr\"odinger equation}
\author{
Yavar Kian$^*$ and
Eric Soccorsi$^*$
}
\begin{abstract}
We consider the inverse problem of determining the time and space dependent electromagnetic potential of the Schr\"odinger equation in a  bounded domain of $\R^n$, $n\geq 2$, by boundary observation of the solution over the entire time span. Assuming that the divergence of the magnetic potential is fixed, we prove that the electric potential and the magnetic potential can be H\"older stably retrieved from these data, whereas stability estimates for inverse time-dependent coefficients problems of evolution partial differential equations are usually of logarithmic type.

\medskip
\noindent
{\bf  Keywords:} Inverse problem, stability estimate, Schr\"odinger equation, time-dependent electromagnetic potential.\\

\medskip
\noindent
{\bf Mathematics subject classification 2010:} 35R30, 35Q41.
\end{abstract}


\maketitle

\renewcommand{\thefootnote}{\fnsymbol{footnote}}
\footnotetext{\hspace*{-5mm} 
\begin{tabular}{@{}r@{}p{13cm}@{}} 
$^*$& Aix Marseille Universit\'e, Universit\'e de Toulon, CNRS, CPT, 
Marseille, France.
\end{tabular}}
\section{Introduction}
\setcounter{equation}{0}

\subsection{Statement of the problem}
Let $\Omega$ be a bounded, simply connected domain of $\R^n$, $n \geq 2$, with $C^2$ boundary $\pd \Omega$. 
For $T \in (0,+\infty)$, we consider the initial boundary value problem (IBVP)
\bel{1.1}
\left\{
\begin{array}{ll}
\para{i\pd_t+\Delta_{A}+q} u=0  & \mbox{in}\ Q:=(0,T) \times \Omega,\\
u(0,\cdot)=0 & \mbox{in}\ \Omega,\\
u=g & \mbox{on}\ \Sigma:=(0,T) \times \Gamma,
\end{array}
\right.
\ee
where $\Delta_{A}$ is the Laplace operator  $(\nabla + i A(t,x) ) \cdot (\nabla  + i A(t,x) )$, associated with the real-valued magnetic potential $A :=(a_j)_{1 \leq j \leq n} \in W^{1,\infty}(Q;\R)^n$, i.e.
\bel{DelA}
\Delta_{A}:=\sum_{j=1}^n
\para{\pd_{x_j}+i a_j(t,x)}^2 = \Delta+2i A(t,x) \cdot \nabla +i (\nabla \cdot A(t,x)) -|A(t,x)|^2
\ee
and $q \in L^{\infty}(Q;\R)$ is a real-valued electric potential. Here and in the remaining part of this text, we denote by $\nabla:=(\pd_{x_1},\ldots,\pd_{x_n})^T$ the gradient operator with respect to the spatial variable $x:=(x_1,\ldots,x_n) \in \R^n$, the symbol $\cdot$ (resp., $| \cdot |$) stands for the Euclidian scalar product (resp., norm) in $\R^n$, and the divergence operator with respect to $x \in \R^n$ is represented by the notation $\nabla \cdot$.

For all $s,r\in(0,+\infty)$  and for $X$ being either $\Omega$ or $\pd \Omega$, we equip the functional spaces
$H^{r,s}((0,T)\times X) := H^r(0,T;L^2(X)) \cap L^2(0,T;H^s(X))$ with the following norm
$$\norm{u}_{H^{r,s}((0,T) \times X)}^2 :=\norm{u}_{H^r(0,T;L^2(X))}^2+\norm{u}_{L^2(0,T;H^s(X))}^2,$$
and we write $H^{r,s}(Q)$ (resp., $H^{r,s}(\Sigma)$) instead of $H^{r,s}((0,T)\times \Omega)$ (resp., $H^{r,s}((0,T)\times \pd \Omega)$). Then, for all
$$g \in \cH(\Sigma) := \left\{g\in H^{\frac{9}{4}, \frac{3}{2}}(\Sigma);\ g(0,\cdot)=\pd_t g(0,\cdot)=0\ \mbox{on}\ \pd \Omega \right\}, $$
we establish in Proposition \ref{p1} below, that there exists a unique solution $u_g \in  H^{1,2}(Q)$ to \eqref{1.1} and that the mapping $g \mapsto u_g$ is continuous. As a corollary the Dirichlet-to-Neumann (DN) operator associated with \eqref{1.1}, defined by
\bel{DN}
\begin{array}{cccc}
\Lambda_{A,q}: & \cH(\Sigma) & \to & L^2(\Sigma) \\ & g & \mapsto & (\pd_\nu+ i A \cdot \nu ) u_g,
\end{array}
\ee
where $\nu(x)$ denotes the unit outward normal to $\pd \Omega$ at $x$, is bounded. The main purpose of this paper is to examine the stability issue in the inverse problem of determining the electromagnetic potential $(A,q)$ from the knowledge of $\Lambda_{A,q}$.

However, there is a natural obstruction to uniqueness in this problem, in the sense that the mapping $(A,q) \mapsto \Lambda_{A,q}$ is not injective.
This can be seen from the identity
$$ i \pd_t + \Delta_A + q= e^{-i \phi} ( i \pd_t + \Delta_{\tilde{A}} + \tilde{q}  ) e^{i \phi},\  \phi \in W^{3,\infty}(Q) $$
arising from \eqref{DelA} with $(\tilde{A},\tilde{q})=G_{A,q}(\phi):=(A - \nabla \phi, q+\pd_t \phi)$, which entails that 
$(i \pd_t + \Delta_{\tilde{A}} + \tilde{q}) e^{i \phi} u_g = e^{i \phi} (i \pd_t + \Delta_A + q) u_g = 0$ for all $g \in \cH(\Sigma)$.
Therefore, if $\phi$ vanishes on $\Sigma$ then it is apparent that $e^{i \phi} u_g$ is the $H^{1,2}(Q)$-solution to \eqref{1.1}, where $(\tilde{A},\tilde{q})$ is substituted for $(A,q)$. Consequently, it holds true that
$$ \Lambda_{\tilde{A},\tilde{q}} g = (\pd_\nu+ i \tilde{A} \cdot \nu ) e^{i \phi} u_g = e^{i \phi} (\pd_\nu+ i A \cdot \nu ) u_g =
\Lambda_{A,q} g, $$
despite of the fact that $(\tilde{A},\tilde{q})$ does not coincide with $(A,q)$ whenever $\phi$ is not uniformly zero in $Q$. Otherwise stated, since the DN map \eqref{DN} is invariant under the gauge transformation $(A,q) \mapsto G_{A,q}(\phi)$ associated with
$\phi \in W_*^{3,\infty}(Q) := \{ \phi \in W^{3,\infty}(Q);\ \phi_{\vert \Sigma}=0 \}$, then it is hopeless to retrieve $(A,q)$ through $\Lambda_{A,q}$ and the best we can expect is to determine the {\it gauge class} 
$G_{A,q}(W_*^3(Q)):=\{ G_{A,q}(\phi),\ \phi \in W_*^{3,\infty}(Q) \}$ of $(A,q)$. Moreover, for any two gauge equivalent electromagnetic potentials $(A,q)$ and $(\tilde{A},\tilde{q})$, there exists a unique $\phi \in W_*^{3,\infty}(Q)$ such that we have $(\tilde{A},\tilde{q})=G_{A,q}(\phi)$ and we notice for each for $t \in (0,T)$ that the function $\phi(t,\cdot)$ is solution to the following elliptic system:
$$ \left\{ \begin{array}{ll} -\Delta \phi(t,\cdot) = \nabla \cdot ( \tilde{A} - A )(t,\cdot) & \mbox{in}\ \Omega, \\
\phi(t,\cdot) =0 & \mbox{on}\ \pd \Omega. \end{array} \right. $$
Therefore, if the time-dependent electromagnetic potential $(A,q)$ can be determined modulo gauge invariance by $\Lambda_{A,q}$ then it is actually possible to recover $(A,q)$ itself provided the divergence $\nabla \cdot A$ is known.

\subsection{What is known so far}

Since inverse problems are of great interest in applied sciences, it is no surprise that the determination of coefficients in partial differential equations such as the magnetic Schr\"odinger equation under study in this article has attracted the attention of numerous mathematicians over the previous decades.

For instance, using the Bukhgeim-Klibanov method \cite{Bukhgeim-Klibanov}, Baudouin and Puel \cite{Baudouin-Puel} proved Lipschitz stable identification of the time independent electric potential in the dynamical (i.e., non stationary) Schr\"odinger equation from a single boundary observation of the solution. Here the measurement can be performed on any subpart of the boundary fulfilling the geometric control property expressed by Bardos, Lebeau and Rauch in \cite{BLR}. This condition was removed by Bellassoued and Choulli in \cite{BC1}, provided the electric potential is {\it a priori} known in a neighborhood of the boundary. We refer to \cite{CS} for the Lipschitz stable reconstruction of the magnetic potential in the Coulomb gauge class by a finite number of boundary measurements of the solution to the Schr\"odinger equation. More recently, in \cite{BAM}, Ben A\"icha and Mejri claimed simultaneous Lipschitz stable determination of the electric potential and the divergence free magnetic potential, from the same type of boundary data.

All the above mentioned results involve a finite number of boundary observations of the solution, performed over the entire time span. This is no longer the case in \cite{BC2} where the magnetic field was stably recovered from the knowledge of the DN map associated with the dynamic  Schr\"odinger equation. In the same spirit, Bellassoued and Dos Santos Ferreira proved stable identification of the electric potential by the DN map associated with the Schr\"odinger equation on a Riemannian manifold in \cite{BD}.
This result was extended in \cite{Be} to simultaneous determination of the electric potential and the magnetic field. We also refer to \cite{DKSU,ER,Ki3} for an extensive treatment of similar inverse problems. We stress out that all the above results were established in a bounded domain and that the analysis carried out in \cite{Baudouin-Puel} (resp. \cite{BC1} and \cite{BC2}) was adapted to the case of unbounded cylindrical domains in \cite{BKS} (resp., \cite{BKS2} and \cite{KPS1,KPS2}). 

All the above mentioned works are concerned with space-only dependent (i.e. time independent) coefficients. Actually, there is only a very small number of papers available in the mathematical literature, dealing with the inverse problem of determining time-dependent coefficients of the Schr\"odinger equation. For instance, it was proved in \cite{E1} that the DN map uniquely determines
the time-dependent electromagnetic potential modulo gauge invariance. The stability issue for the same problem was examined in \cite{CKS}, where the time-dependent electric potential was logarithmic stably recovered from boundary observation for all times and internal measurement at final time, of the solution. More recently, in \cite{Ben2}, this approach was adapted to the case of an electromagnetic potential with sufficiently small time independent magnetic part. To the best of our best knowledge, these two last articles are the only mathematical papers studying the stability issue in the inverse problem of determining time-dependent coefficients of the Schr\"odinger equation. Nevertheless, we point out that similar problems were addressed in \cite{BB,Ben1,CK1,CK2,E2,E3,FK,GK,I1,Ki1,Ki2,Ki4,RS,RR,St1} for either parabolic, hyperbolic, or even time-fractional diffusion equations.

\subsection{Main result}
 

The main result of this paper is the following H\"older stability estimate of the electromagnetic potential entering the Schr\"odinger equation in \eqref{1.1}, with respect to the DN map. 

\begin{theorem}
\label{t1}
Fix $M \in (0,+\infty)$ and for $j=1,2$, let $A_j\in W^{5,\infty}(Q)^n\cap H^6(Q)^n$ and $q_j\in W^{4,\infty}(Q)$ satisfy the three following conditions:
\bel{cond1} 
\pd_x^\alpha A_1(t,x)=\pd_x^\alpha A_2(t,x),\ (t,x)\in\Sigma,\ \alpha\in\mathbb N^n,\ |\alpha|\leq5,
\ee
\bel{cond2}
\nabla \cdot A_1(t,x)=\nabla \cdot A_2(t,x),\ (t,x)\in Q
\ee
and
\bel{cond3}
\sum_{j=1}^2\left( \norm{A_j}_{W^{5,\infty}(Q)^n}+\norm{A_j}_{H^6(Q)^n}+\norm{q_j}_{W^{4,\infty}(Q)} \right) \leq M.
\ee
Then, there exist three positive constants, $r$ and $s$, depending only on $n$, and $C$, depending only on $T$, $\Omega$ and $M$, such that we have
\bel{t1a}
\norm{A_1-A_2}_{L^2(0,T;H^5(\Omega))}\leq C \norm{\Lambda_{A_1,q_1}-\Lambda_{A_2,q_2}}^{r}
\ee
and
\bel{t1b}
\norm{q_1-q_2}_{H^{-1}(Q)} \leq C\norm{\Lambda_{A_1,q_1}-\Lambda_{A_2,q_2}}^{s}.
\ee
\end{theorem}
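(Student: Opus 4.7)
The plan is to follow the now-standard geometric optics (GO) strategy adapted to the dynamical magnetic Schr\"odinger equation, combined with a frequency-cutoff interpolation argument that upgrades the usual logarithmic rate to a H\"older one under the stronger a priori bound \eqref{cond3}.

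First, for a large parameter $\lambda>0$ and prescribed ``output'' frequencies $(\tau,\xi)\in\R\times\R^n$, I would construct $H^{1,2}(Q)$-solutions $u_1$ to $(i\pd_t+\Delta_{A_1}+q_1)u_1=0$ and $u_2$ to the backward adjoint equation with $(A_2,q_2)$ of the form $u_j=e^{i\lambda\psi_j}(a_j+\lambda^{-1}r_{j,\lambda})$. The phases $\psi_j$ are chosen linear in $(t,x)$, solving $\pd_t\psi_j+|\nabla\psi_j|^2=0$, so that the $O(\lambda^2)$ terms vanish. The principal amplitudes $a_j$ are then fixed by a first-order transport equation involving $A_j$, which can be solved explicitly because $\nabla\psi_j$ is constant; the remainders $r_{j,\lambda}$ are controlled uniformly in $\lambda$ by the well-posedness bound of Proposition \ref{p1}, exploiting the high regularity of $A_j,q_j$ from \eqref{cond3}. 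The two phases are coupled so that $\psi_1+\overline{\psi_2}$ reduces to a plane wave $t\tau+x\cdot\xi$ modulo $O(\lambda^{-1})$, allowing Fourier recovery of the coefficients.

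Next, multiplying the equation for $u_1$ by $\bar u_2$ and integrating by parts over $Q$ yields the standard identity
\begin{equation*}
\int_Q\!\bigl[2i(A_1-A_2)\cdot\nabla u_1+\bigl(|A_1|^2-|A_2|^2+i\nabla\cdot(A_1-A_2)-(q_1-q_2)\bigr)u_1\bigr]\bar u_2\,\d x\,\d t=\langle(\Lambda_{A_1,q_1}-\Lambda_{A_2,q_2})g_1,\overline{g_2}\rangle_{\Sigma},
\end{equation*}
the time-boundary terms being killed by the choice of initial/final data of the GOs. Plugging in the GO ansatz, the leading $O(\lambda)$ term is proportional to the Fourier transform of a scalar projection of $A_1-A_2$ at $(\tau,\xi)$; varying the auxiliary free parameters in $\psi_j$ (in the spirit of \cite{E1,Ben2}) produces enough such projections to reconstruct the whole magnetic-field tensor $d(A_1-A_2)$. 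Thanks to the gauge hypothesis \eqref{cond2} the difference $A_1-A_2$ is divergence-free, and by \eqref{cond1} it vanishes on $\pd\Omega$, so a standard div-curl elliptic estimate on each time slice recovers $A_1-A_2$ itself from its curl. Schematically this yields, for every $R\geq1$,
\begin{equation*}
\norm{A_1-A_2}_{L^2(\{|(\tau,\xi)|\leq R\})}\leq C\bigl(e^{CR}\norm{\Lambda_{A_1,q_1}-\Lambda_{A_2,q_2}}+R^{-1}\bigr),
\end{equation*}
where the polynomial factor in $R$ comes from the remainder bound. The high-frequency tail is cut off using the a priori $H^6$-bound \eqref{cond3}, optimizing $R$ as a power of $\log(1/\norm{\Lambda_1-\Lambda_2}^{-1})$ or better as a power of $\norm{\Lambda_1-\Lambda_2}$ itself if the GO remainder only loses polynomially in $\lambda$; a Sobolev interpolation between $L^2$ and $H^6$ then gives \eqref{t1a} in the $L^2(0,T;H^5(\Omega))$ norm. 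Once $A_1-A_2$ is controlled, the same integral identity with a different choice of GO amplitudes isolates the $q_1-q_2$ contribution and, by the same interpolation trick but in $H^{-1}$ (to absorb the extra $\nabla$ factor picked up from the magnetic term and to cope with $q_j\in W^{4,\infty}$ only), delivers \eqref{t1b}.

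The main obstacle, and the reason the resulting stability is H\"older rather than logarithmic, lies in step two: one must construct GOs whose remainders are bounded by a \emph{polynomial} negative power of $\lambda$ in the norm dual to $\cH(\Sigma)$. This requires carrying out the transport-equation construction to sufficiently high order (which consumes the high regularity assumed in \eqref{cond3} and the boundary-agreement condition \eqref{cond1} to derivatives of order $5$) and tracking the $\lambda$-dependence of the boundary data $g_j=u_{j|\Sigma}$ in the $H^{9/4,3/2}$-norm appearing in Proposition \ref{p1}. Any $e^{C\lambda}$-type loss at this stage would collapse the argument back to logarithmic stability, so the technical heart of the proof is a quantitative GO construction with sharp polynomial control in $\lambda$, combined with a careful estimate of the boundary pairing in terms of the operator norm of $\Lambda_{A_1,q_1}-\Lambda_{A_2,q_2}$.
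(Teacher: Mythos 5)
Your overall architecture coincides with the paper's: real-phase GO solutions $u_j=\varphi_\sigma(u_{j,1}+\sigma^{-1}u_{j,2})+r_{j,\sigma}$ with amplitudes solving transport equations and remainders controlled by the energy estimate of Lemma \ref{l1}, the integral identity in which the boundary term is the DN-map difference paired against the second GO, Fourier recovery of $A_1-A_2$ using \eqref{cond2} and then of $q_1-q_2$ (with the magnetic error absorbed through an $L^\infty$ bound on $A_1-A_2$ obtained from \eqref{t1a} by Sobolev embedding and interpolation), followed by a frequency truncation against the a priori bound \eqref{cond3} and an optimization of the parameters. However, two points of your sketch are either incorrect as stated or left unresolved, and they are precisely where the paper does its real work.

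First, the leading term is \emph{not} the Fourier transform of a scalar projection of $A_1-A_2$: solving $\omega\cdot\nabla_{A_j}u_{j,1}=0$ forces the principal amplitudes to carry $\exp\left(i\int\tilde A_j\cdot\omega\,ds\right)$, so the product $u_{1,1}\overline{u_{2,1}}$ contains the nonlinear factor $e^{i\int A\cdot\omega\,ds}$ with $A:=A_1-A_2$, and what one reads off directly is an exponentiated light-ray transform. The paper's device is the special amplitude $\beta$ of \eqref{GO8}, a transverse derivative $y\cdot\nabla$ of $e^{-i(t\tau+\xi\cdot x)}\exp\left(-i\int_\R A\cdot\omega\,ds\right)$: the integral along the $\omega$-line is then exact (the integrand is a perfect $\pd_\kappa$-derivative), an integration by parts in $\omega^\bot$ cancels the exponentials, and the choice $y=\xi/|\xi|$ yields exactly $|\xi|\,\widehat{\chi^2A}(\tau,\xi)\cdot\omega$ (Lemma \ref{l3}); the divergence-free condition \eqref{cond2} then gives the whole vector $\widehat{\chi^2A}(\tau,\xi)$ by letting $\omega$ range over $\xi^\bot$, so no passage through $d(A_1-A_2)$ and no slice-wise div-curl elliptic estimate is needed. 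Your detour through the curl could in principle be made to work, but you would still have to dispose of the exponential factor first, and you would have to check that the elliptic gain per time slice is compatible with the $L^2(0,T;H^5(\Omega))$ norm claimed in \eqref{t1a}. Second, your displayed inequality with the factor $e^{CR}$ would only give a logarithmic rate, and you leave the needed polynomial control as an open ``technical heart''. In this problem it is not in doubt and should be asserted: the phase has modulus one, the amplitudes and their trace norms grow only polynomially in $\langle\tau,\xi\rangle$ and in $\delta^{-1}$, and the remainders obey the polynomial bounds \eqref{GO111}--\eqref{GO112}, so the key estimate \eqref{l3a} is purely polynomial in $(\sigma,\delta^{-1},\langle\tau,\xi\rangle)$. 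Note also that the paper's optimization involves a third parameter you omit, the width $\delta$ of the time cutoff $\chi_\delta$ (needed so that the amplitudes vanish near $t=0,T$ and the time-boundary terms drop out); the H\"older exponent in \eqref{p2d}--\eqref{p2e} comes from choosing $\delta$, $\sigma$ and $R$ jointly as powers of each other, including the $O(\delta)$ error incurred by replacing $A$ with $\chi^2A$, which your sketch does not account for.
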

In Theorem \ref{t1} and the remaining part of this article, the DN maps $\Lambda_{A_j,q_j}$, $j=1,2$, lie in the space $\cB(\cH(\Sigma),L^2(\Sigma))$ of linear bounded operators
from $\cH(\Sigma)$ into $L^2(\Sigma)$ and $\norm{\cdot}$ denotes the usual norm in $\cB(\cH(\Sigma),L^2(\Sigma))$.

\subsection{Brief comments and outline}

To the author's best knowledge, Theorem \ref{t1} is the only result available in the mathematical literature claiming H\" older stable determination of space and time varying coefficients appearing in an evolution PDE (all the other existing stability estimates derived in this framework are at best of logarithmic type). Moreover, even if the identification of unknown coefficients depending on both time and space variable is of great interest in its own, it is worth mentioning that it can also be linked with the inverse problem of determining a nonlinear perturbation of a PDE. As a matter of fact it was proved in 
 \cite{CK2,I2} by mean of a linearization process that the semilinear term entering a nonlinear parabolic equation can be identified by solving the inverse problem of determining the time-dependent coefficient of a related linear parabolic equation. From this viewpoint there is no doubt that Theorem \ref{t1} is a useful tool for adapting this strategy to the case of semilinear Schr\"odinger equations.
 
The remaining part of this article is organized as follows. In the coming section, Section \ref{sec-dir}, we study the well-posdeness of problem \eqref{1.1} and we prove that the DN map \eqref{DN} is well defined as a linear bounded operator from $\cH(\Sigma)$ into
 $L^2(\Sigma)$. In Section \ref{sec-GO} we build a set of geometrical optics solutions to \eqref{1.1} which are the main tool for deriving Theorem \ref{t1}. Finally, the proof of the stability estimate \eqref{t1a} is presented in Section \ref{sec-sema}, whereas the one of \eqref{t1b} is given in Section \ref{sec-seel}.
 

\section{Analysis of the forward problem}
\label{sec-dir}
\setcounter{equation}{0}
The main result of this section is the following existence and uniqueness result for the IBVP \eqref{1.1}.

\begin{Prop}
\label{p1} 
For $M \in (0,+\infty)$, let $A\in W^{2,\infty}(Q;\R)^n$ and $q\in W^{1,\infty}(Q;\R)$ satisfy 
\bel{p1a}
\norm{A}_{W^{2,\infty}(Q)^n}+\norm{q}_{W^{1,\infty}(Q)}\leq M.
\ee
Then, for every $g\in \cH(\Sigma)$, the system \eqref{1.1} admits a unique solution $u\in  H^{1,2}(Q)$. Moreover, there exists a positive constant $C$, depending only on $M$, $T$ and $\Omega$, such that we have
\bel{p1b} 
\| u \|_{H^{1,2}(Q)} \leq C \| g\|_{\cH(\Sigma)}.
\ee
\end{Prop}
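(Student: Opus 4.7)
The plan is to reduce \eqref{1.1} to a problem with homogeneous boundary and initial data by lifting $g$ into the cylinder, to solve the reduced problem by a Kato-type semigroup argument (or a Galerkin scheme), and to track norms along the way in order to derive \eqref{p1b}. Uniqueness will be obtained as a byproduct of the energy identity for the magnetic Schr\"odinger equation.

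I would first dispatch uniqueness. If $u\in H^{1,2}(Q)$ solves \eqref{1.1} with $g=0$, multiplying the equation by $\overline u$, integrating over $\Omega$, and integrating by parts (using $u\vert_\Sigma=0$) transforms the magnetic term into $-\|(\nabla+iA)u\|_{L^2(\Omega)^n}^2$, which is real-valued. Taking the imaginary part then yields $\frac{d}{dt}\|u(t,\cdot)\|_{L^2(\Omega)}^2=0$, and combined with $u(0,\cdot)=0$ this forces $u\equiv 0$ in $Q$. For existence I would invoke an anisotropic lifting theorem to produce $G\in H^{1,2}(Q)$ with $G\vert_\Sigma=g$, $G(0,\cdot)=0$ in $\Omega$, and $\|G\|_{H^{1,2}(Q)}\leq C\|g\|_{\mathcal H(\Sigma)}$. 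The compatibility conditions $g(0,\cdot)=\partial_t g(0,\cdot)=0$ built into $\mathcal H(\Sigma)$, together with the time-regularity index $9/4$ and space-regularity index $3/2$, are precisely what is needed to make such an extension available while enforcing the vanishing initial trace. Writing $v:=u-G$, finding $u$ then amounts to solving
\[
\begin{cases}
(i\partial_t+\Delta_A+q)v = F &\text{in } Q, \\
v(0,\cdot)=0 &\text{in } \Omega, \\
v=0 &\text{on } \Sigma,
\end{cases}
\]
where $F:=-(i\partial_t+\Delta_A+q)G$ lies in $L^2(Q)$ thanks to \eqref{p1a} and the regularity of $G$, with the additional property $F(0,\cdot)=0$ in $\Omega$.

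The reduced problem I would address by Kato's theory for non-autonomous evolution equations. Indeed, for each fixed $t\in[0,T]$ the operator $-\Delta_{A(t,\cdot)}-q(t,\cdot)$ is self-adjoint on $L^2(\Omega)$ with $t$-independent domain $H^2(\Omega)\cap H_0^1(\Omega)$, and under \eqref{p1a} its dependence on $t$ is Lipschitz in the relevant operator norm, so it generates a unitary propagator on $L^2(\Omega)$. A first energy estimate yields $v\in C([0,T];L^2(\Omega))$; differentiating the equation in time and applying the same identity to $\partial_t v$, while using $F(0,\cdot)=0$ and $\partial_t F\in L^2(Q)$, upgrades this to $\partial_t v\in L^2(0,T;L^2(\Omega))$; and finally solving for $\Delta v$ in \eqref{DelA} as an expression in $v$, $\nabla v$, $\partial_t v$ and $F$ and applying $H^2$ elliptic regularity for the Dirichlet Laplacian on $\Omega$ (whose $C^2$ boundary is used here) gives $v\in L^2(0,T;H^2(\Omega))$. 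Adding $G$ back recovers $u=v+G\in H^{1,2}(Q)$ and, by combining the bounds on $v$ and $G$, yields \eqref{p1b}.

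I expect the main obstacle to be the anisotropic lifting step: one has to manufacture $G\in H^{1,2}(Q)$ out of $g\in\mathcal H(\Sigma)$ with the correct vanishing compatibility at $t=0$ and, more delicately, with enough time-regularity so that both $\partial_t G$ and $\partial_t F$ lie in $L^2(Q)$ (this second condition being what ultimately allows one to reach the full $H^{1,2}$-regularity of $v$ instead of the weaker $C([0,T];L^2)$ estimate). The fractional exponents $9/4$ and $3/2$ entering the definition of $\mathcal H(\Sigma)$ are calibrated exactly to meet the anisotropic trace theorem at this level of regularity, and the careful verification of this fit, rather than the evolution-equation step itself, is the technical heart of the argument.
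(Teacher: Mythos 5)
Your overall route mirrors the paper's: lift the boundary data, reduce to a homogeneous Dirichlet problem with a source term $F$, solve the reduced problem by an evolution argument together with a time-differentiated energy estimate, and recover the $L^2(0,T;H^2(\Omega))$ regularity from elliptic theory (the paper runs Faedo--Galerkin where you invoke Kato's theory, which is an immaterial difference). The genuine gap is in the lifting step. You take $G\in H^{1,2}(Q)$ with $G\vert_\Sigma=g$ and $G(0,\cdot)=0$, which only gives $F:=-(i\pd_t+\Delta_A+q)G\in L^2(Q)$; but every subsequent step of your own scheme needs strictly more: $F(0,\cdot)$ is not even well defined for a mere $L^2(Q)$ function, and the time-differentiated estimate for $\pd_t v$ requires $\pd_t F\in L^2(Q)$, hence $\pd_t^2 G\in L^2(Q)$ and $\Delta(\pd_t G)\in L^2(Q)$ (plus control of terms like $\pd_t A\cdot\nabla G$), none of which follows from $G\in H^{1,2}(Q)$. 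Moreover, without $F\in H^1(0,T;L^2(\Omega))$ and $F(0,\cdot)=0$ the conclusion $v\in L^2(0,T;H^2(\Omega))$ fails in general: the Schr\"odinger flow does not regularize, so an $L^2(Q)$ source only yields bounds of the type $v\in\cC([0,T];L^2(\Omega))$, whereas your elliptic-regularity step needs $\pd_t v(t,\cdot)\in L^2(\Omega)$ for each $t$, which is exactly what the $H^1$-in-time hypothesis on $F$ produces.

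You do flag this issue in your last paragraph, but flagging it is where the actual work lies, and as stated ($G\in H^{1,2}(Q)$) the scheme cannot be closed. The paper resolves it by lifting $g$ to $G\in H^{3,2}(Q)$ with $G(0,\cdot)=\pd_t G(0,\cdot)=0$, using Lions--Magenes (Vol.~II, Chapter 4, Theorem 2.3); it is precisely this step that dictates the exponents $\frac94$, $\frac32$ and the two compatibility conditions in $\cH(\Sigma)$. One then checks $F\in H^1(0,T;L^2(\Omega))$ with $F(0,\cdot)=0$ and applies the homogeneous-data result (Lemma \ref{l1}), whose time-differentiated Galerkin estimates plus elliptic regularity yield $v\in\cC([0,T];H_0^1(\Omega)\cap H^2(\Omega))\cap\cC^1([0,T];L^2(\Omega))$ together with the bound leading to \eqref{p1b}. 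So to repair your proposal, replace the $H^{1,2}(Q)$ lifting by one at (at least) the $H^{3,2}(Q)$ level with vanishing of $G$ and $\pd_t G$ at $t=0$; with that correction your argument essentially coincides with the paper's. Your uniqueness argument via the $L^2$ conservation identity is correct and slightly more direct than the paper's, which deduces uniqueness by applying \eqref{p1b} with $g=0$.
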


With reference to \eqref{DN} and the continuity of the trace operator from $H^{1,2}(Q)$ into $L^2(\Sigma)$, Proposition \ref{p1} immediately entails the following:

\begin{Corollary}
\label{cor}
Under the conditions of Proposition \ref{p1}, the DN map $\Lambda_{A,q}$ is well defined by \eqref{DN} and acts as a bounded operator from $\cH(\Sigma)$ into $L^2(\Sigma)$.
\end{Corollary}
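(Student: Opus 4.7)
The statement to establish is that $\Lambda_{A,q}$ maps $\cH(\Sigma)$ continuously into $L^2(\Sigma)$, granted Proposition \ref{p1}. Since the well-posedness result gives the bound $\|u_g\|_{H^{1,2}(Q)} \leq C \|g\|_{\cH(\Sigma)}$, the proof reduces to showing that the nonlinear operation $u \mapsto (\partial_\nu + iA\cdot\nu) u_{|\Sigma}$ is continuous from $H^{1,2}(Q)$ into $L^2(\Sigma)$. The plan is therefore to exhibit the DN map as the composition $g \mapsto u_g \mapsto (\partial_\nu + iA\cdot\nu) u_g$ of two bounded linear maps.

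The argument splits the trace $(\partial_\nu + iA \cdot \nu)u_g$ into two contributions. For the conormal derivative $\partial_\nu u_g$, I would invoke the inclusion $H^{1,2}(Q) \subset L^2(0,T;H^2(\Omega))$ together with the classical boundedness of the normal-derivative trace $H^2(\Omega) \to H^{1/2}(\partial\Omega) \hookrightarrow L^2(\partial\Omega)$, which is available because $\partial\Omega$ is $C^2$. Integrating in $t$ and applying Fubini yields $\|\partial_\nu u_g\|_{L^2(\Sigma)} \leq C \|u_g\|_{L^2(0,T;H^2(\Omega))} \leq C \|u_g\|_{H^{1,2}(Q)}$. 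For the zeroth-order term $iA\cdot \nu \, u_g$, the Sobolev hypothesis $A \in W^{2,\infty}(Q)^n$ and the smoothness of $\partial\Omega$ give $A\cdot\nu \in L^\infty(\Sigma)$ with norm controlled by $M$; combining this with the standard trace estimate $\|u_g\|_{L^2(\Sigma)} \leq C \|u_g\|_{L^2(0,T;H^1(\Omega))} \leq C \|u_g\|_{H^{1,2}(Q)}$ gives the desired bound.

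Chaining these two pointwise-in-$t$ trace estimates with Proposition \ref{p1} produces
\[
\|\Lambda_{A,q} g\|_{L^2(\Sigma)} \leq C \bigl( 1 + \|A\|_{L^\infty(Q)^n} \bigr) \|u_g\|_{H^{1,2}(Q)} \leq C'(M,T,\Omega) \|g\|_{\cH(\Sigma)},
\]
which yields both well-definedness and boundedness. Linearity of $\Lambda_{A,q}$ follows from the linearity of $g \mapsto u_g$, itself a consequence of the uniqueness part of Proposition \ref{p1}.

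I do not foresee a substantive obstacle: everything is either already packaged in Proposition \ref{p1} or is a routine trace inequality. The only point requiring a minor amount of care is checking that the $L^\infty(\Sigma)$ norm of $A \cdot \nu$ is indeed controlled by $\|A\|_{W^{2,\infty}(Q)^n}$, which is immediate from $W^{2,\infty}(Q) \hookrightarrow L^\infty(Q) \hookrightarrow L^\infty(\Sigma)$ once restricted to the boundary, and from the boundedness of $\nu$ on the $C^2$ surface $\partial\Omega$.
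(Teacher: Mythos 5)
Your argument is correct and is essentially the paper's: the authors dispose of the corollary in one line by citing the continuity of the (conormal) trace from $H^{1,2}(Q)$ into $L^2(\Sigma)$ together with Proposition \ref{p1}, which is exactly the composition you carry out. Your version merely spells out the routine details (splitting off $\partial_\nu u_g$ via $L^2(0,T;H^2(\Omega))\to L^2(0,T;H^{1/2}(\partial\Omega))$ and bounding $A\cdot\nu\,u_g$ by $\|A\|_{L^\infty}$ times the trace of $u_g$), all of which is consistent with what the paper leaves implicit.
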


The proof of Proposition \ref{p1} can be found in Section \ref{sec-p1} by mean of an existence and uniqueness result for the IBVP \eqref{1.1} with homogeneous Dirichlet boundary condition and suitable source term, stated in Section \ref{sec-eur}. As a preamble, we recall that the sesquilinear form associated with the operator $-\Delta_{A(t,\cdot)}+ q(t,\cdot)=-(\nabla + i A(t,\cdot) ) \cdot (\nabla + i A(t,\cdot)) +q(t,\cdot)$ is $H^1(\Omega)$-elliptic with respect to $L^2(\Omega)$, uniformy in $t \in (0,T)$.

\subsection{$H^1(\Omega)$-ellipticity with respect to $L^2(\Omega)$.}

We define the magnetic gradient $\nabla_A$, associated with $A \in L^{\infty}(Q)^n$, by
\bel{ell0a}
\nabla_A u(t,x) := (\nabla + i A(t,x) ) u(x),\ u \in H_0^1(\Omega),\ (t,x) \in Q,
\ee
and for $q \in L^{\infty}(Q)$, we introduce the sesquilinear form
\bel{ell0b}
a(t;u,v):=\int_\Omega \nabla_A u(t,x) \cdot \overline{\nabla_A v(t,x)}dx+\int_\Omega q(t,x) u(x) \overline{v(x)} dx,\ u,v \in H_0^1(\Omega).
\ee
Then, the H\"older inequality yields
$$
\| \nabla_A u(t,\cdot) \|_{L^2(\Omega)^n}^2   \geq  \frac{\| \nabla u \|_{L^2(\Omega)^n}^2}{2}  -2 \| A \|_{L^\infty(Q)^n}^2 \| u \|_{L^2(\Omega)}^2, 
$$ 
for every $u \in H_0^1(\Omega)$ and $t \in (0,T)$, so we get
\bel{ell1} 
a(t;u,u)+ \lambda  \| u \|_{L^2(\Omega)}^2 \geq \alpha \| u \|_{H^1(\Omega)}^2,\ u \in H_0^1(\Omega),\ t \in (0,T),
\ee
with $\lambda := \frac{1}{2} + \| q\|_{L^\infty(Q)}^2 + 2 \| A \|_{L^\infty(Q)^n}^2$ and $\alpha := \frac{1}{2}$.

\subsection{Existence and uniqueness result}
\label{sec-eur}
The proof of Proposition \ref{p1} essentially boils down to the following existence and uniqueness result for the following IBVP associated with a suitable source term $F$:
\bel{eq2}
\left\{\begin{array}{ll}
(i\pd_t +\Delta_{A} + q ) v = F & \mbox{in}\ Q,\\ 
 v(0,\cdot)=0 &\mbox{in}\ \Omega,\\ 
 v=0 &\mbox{on}\ \Sigma.
\end{array}
\right.
\ee

\begin{lemma}
\label{l1} 
Let $M$, $A$ and $q$ be the same as in Proposition \ref{p1} and
let $F \in H^1(0,T;L^2(\Omega))$ verify $F(0,\cdot)=0$ a.e. in $\Omega$. Then, the system
\eqref{eq2} admits a unique solution $v \in \cC([0,T],H_0^1(\Omega)\cap H^2(\Omega) )\cap \cC^1([0,T],L^2(\Omega))$ satisfying
\bel{l1a}
\| v \|_{\cC([0,T],H^2(\Omega))} +\| v \|_{\cC^1([0,T],L^2(\Omega))}\leq C \| F \|_{H^1(0,T;L^2(\Omega))},
\ee
for some positive constant $C$ depending only on $T$, $\Omega$ and $M$.
\end{lemma}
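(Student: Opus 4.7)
The plan is to construct $v$ by a standard evolution-equation argument and then bootstrap its regularity through time differentiation combined with elliptic regularity. Set $L(t):=-\Delta_{A(t,\cdot)}-q(t,\cdot)$ with domain $D:=H^2(\Omega)\cap H_0^1(\Omega)$. The $H^1(\Omega)$-ellipticity \eqref{ell1}, the Lax--Milgram theorem and the symmetry of the form $a(t;\cdot,\cdot)$ introduced in \eqref{ell0b} ensure that for each $t\in[0,T]$, $L(t)+\lambda$ is a positive self-adjoint isomorphism from $D$ onto $L^2(\Omega)$, so that $-iL(t)$ generates a unitary group on $L^2(\Omega)$. Under \eqref{p1a} the map $t\mapsto L(t)$ is strongly $C^1$ on $D$, and classical Kato-type evolution theory for time-dependent generators produces a strongly continuous evolution family $U(t,s)$ on $L^2(\Omega)$ preserving $D$; the unique $L^2$-solution of \eqref{eq2} is then given by the Duhamel formula $v(t)=-i\int_0^t U(t,s)F(s)\,ds$. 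Taking the imaginary part of the scalar product of \eqref{eq2} with $v$ and using the self-adjointness of $L(t)$ yields $\tfrac{1}{2}\tfrac{d}{dt}\|v(t)\|_{L^2(\Omega)}^2=\im\langle F(t),v(t)\rangle$, hence $\|v(t)\|_{L^2(\Omega)}\leq \|F\|_{L^1(0,T;L^2(\Omega))}$.

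Next I would differentiate \eqref{eq2} in time: the function $w:=\pd_t v$ solves $(i\pd_t+\Delta_A+q)w=\pd_t F-(\pd_t L)v$ in $Q$ with $w_{|\Sigma}=0$, the initial condition $w(0,\cdot)=0$ being a consequence of $v(0,\cdot)=0$ and $F(0,\cdot)=0$ plugged into the equation at $t=0$. A direct computation based on \eqref{DelA} shows that $\pd_t L(t)=-2i(\pd_t A)\cdot\nabla-i\nabla\cdot(\pd_t A)+2A\cdot\pd_t A-\pd_t q$ is a first-order differential operator in $x$ whose coefficients are bounded in terms of $M$, so $\|(\pd_t L)v(t)\|_{L^2(\Omega)}\leq C\|v(t)\|_{H^1(\Omega)}$. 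Applying the $L^2$ energy estimate of the previous step to $w$ yields
$$\|w(t)\|_{L^2(\Omega)}\leq \int_0^t\bigl(\|\pd_t F(s)\|_{L^2(\Omega)}+C\|v(s)\|_{H^1(\Omega)}\bigr)\,ds.$$
Rewriting \eqref{eq2} as $-\Delta v=(\Delta_A-\Delta)v+qv-i\pd_t v-F$ with $v_{|\pd\Omega}=0$, and invoking $H^2$ regularity for the Dirichlet Laplacian on the $C^2$ domain $\Omega$, after absorbing the first-order term $2iA\cdot\nabla v$ by interpolation, gives
$$\|v(t)\|_{H^2(\Omega)}\leq C\bigl(\|\pd_t v(t)\|_{L^2(\Omega)}+\|F(t)\|_{L^2(\Omega)}+\|v(t)\|_{L^2(\Omega)}\bigr).$$
Substituting this elliptic bound into the preceding inequality and applying Gronwall's lemma closes the control of $\|\pd_t v\|_{L^\infty(0,T;L^2(\Omega))}$ by $C\|F\|_{H^1(0,T;L^2(\Omega))}$, which combined with the elliptic bound delivers \eqref{l1a}. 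Continuity in $\cC([0,T],H^2(\Omega))\cap\cC^1([0,T],L^2(\Omega))$ follows from the continuity in $t$ of the right-hand sides of \eqref{eq2} and of the elliptic identity.

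\textbf{Main obstacle.} The delicate point is to make the above time differentiation rigorous, since a priori the mild solution $v$ need not possess the regularity required for $\pd_t v$ to satisfy the differentiated equation in the strong sense. I would handle this by carrying out the entire argument at the level of a Galerkin approximation based on the eigenfunctions of the Dirichlet Laplacian in $\Omega$, for which all the manipulations above are licit, and then pass to the limit using the uniform bounds just derived. Uniqueness is then a trivial consequence of the $L^2$ energy estimate applied to the difference of two solutions.
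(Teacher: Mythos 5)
Your proposal is correct in substance and shares the paper's overall architecture (Galerkin approximation, an $L^2$ energy estimate for $v$, differentiation in time and an $L^2$ estimate for $\pd_t v$ with the commutator $\pd_t L$ treated as a bounded first-order operator, and elliptic regularity to reach $H^2$), but the middle of the argument is organized differently. The paper inserts a separate a priori estimate: it tests the Galerkin system with $\pd_t v_m$, takes real parts, integrates by parts in time in $\int\langle F,\pd_t v_m\rangle$ and uses the coercivity \eqref{ell1} together with the form derivative $a'$ to get $\norm{v_m}_{L^\infty(0,T;H^1(\Omega))}\leq C\norm{F}_{H^1(0,T;L^2(\Omega))}$ \emph{before} estimating $w_m=\pd_t v_m$; the $H^2$ bound is then obtained only at the very end, by applying elliptic regularity to the limit solution $v(t,\cdot)$ of the elliptic Dirichlet problem with right-hand side $F-i\pd_t v\in\cC([0,T];L^2(\Omega))$. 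You instead skip the direct $H^1$ estimate and close a coupled Gronwall loop, controlling $\norm{v}_{H^1}$ through the elliptic bound $\norm{v(t)}_{H^2}\leq C(\norm{\pd_t v(t)}_{L^2}+\norm{F(t)}_{L^2}+\norm{v(t)}_{L^2})$. This works, but only because of your specific choice of basis: with the eigenfunctions of the Dirichlet Laplacian the projection $P_m$ commutes with $-\Delta$, so $\Delta v_m$ lies in the Galerkin span and the discrete analogue of your elliptic estimate can be read off the projected equation (after absorbing the first-order term by interpolation); with a generic Hilbert basis of $H^1_0(\Omega)$, as used in the paper, $v_m$ does not satisfy the PDE and this step is not available, which is exactly why the paper needs the extra test-with-$\pd_t v_m$ estimate. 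You should state this commutation argument explicitly, since "all the manipulations above are licit" at the discrete level is precisely the delicate point you flagged. Two smaller remarks: the initial semigroup/Kato construction is superfluous once you commit to Galerkin; and the upgrade from $L^\infty(0,T;H^2)\cap W^{1,\infty}(0,T;L^2)$ bounds to genuine $\cC([0,T];H^2(\Omega))\cap\cC^1([0,T];L^2(\Omega))$ continuity is asserted in one line, whereas it requires the standard but nontrivial weak-continuity plus energy-identity argument (the paper delegates it to Lions--Magenes). Your closing Gronwall chain itself is sound: the $w$-estimate, the elliptic bound, and the $L^2$ bound on $v$ combine to give $\norm{\pd_t v}_{L^\infty(0,T;L^2(\Omega))}\leq C\norm{F}_{H^1(0,T;L^2(\Omega))}$ and then \eqref{l1a}.
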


\begin{proof}
We proceed as in the derivation of \cite[Section 3, Theorem 10.1]{LM1} by applying the Faedo-Galerkin method. Namely, we pick a Hilbert basis $\{ e_k,\ k \in \mathbb N^* \}$  of $H^1_0(\Omega)$ and consider an approximated solution of size $m \in \mathbb N^*:=\{1,2,\ldots \}$ of \eqref{eq2}, of the form
\bel{e0}
v_m(t,x) :=\sum_{k=1}^m g_{k,m}(t) e_k(x),\ (t,x) \in Q,
\ee
where the functions $g_{k,m}$ are defined in such a way that we have
\bel{od1}
\left\{\begin{array}{ll}
i \langle \pd_t v_m(t,\cdot),e_k \rangle_{L^2(\Omega)} -a(t;v_m(t,\cdot),e_k) = \langle F(t,\cdot) ,e_k \rangle_{L^2(\Omega)}, & t \in (0,T), \\
v_m(0,\cdot) = 0, &
\end{array}
\right. 
\ee
for all $k=1,\ldots,m$. 
Since $F \in W^{1,1}(0,T;L^2(\Omega))$ then  \eqref{od1} admits a unique solution $v_m\in W^{1,\infty}(0,T;H^1_0(\Omega))$ such that the function $w_m:=\pd_t v_m$ solves the following Cauchy problem for every $k=1,\ldots,m$:
\bel{od2}
\left\{\begin{array}{ll}
i \langle \pd_t w_m(t,\cdot),e_k \rangle_{L^2(\Omega)} -a(t;w_m(t,\cdot),e_k) = a'(t;v_m(t,\cdot),e_k) + \langle \pd_t F(t,\cdot) , e_k \rangle_{L^2(\Omega)}, & t \in (0,T), \\
w_m(0) = 0. &
\end{array}
\right. 
\ee
Here, for all $t \in (0,T)$ and all $u, v \in H_0^1(\Omega)$, we have set with reference to \eqref{ell0a}-\eqref{ell0b}
\bel{e1}
a'(t;u,v):=i \int_\Omega  \pd_t A(t,x) \cdot  \left( u(x) \overline{\nabla_A v(t,x)} -  \nabla_A u(t,x) \overline{v(x)} \right) dx + 
\int_\Omega \pd_t q(t,x) u(x) \overline{v(x)} dx.
\ee

\noindent 1) The first part of the proof is to establish three {\it a priori} estimates for the functions $v_m$ and $w_m$.

a) To this end, we fix $t \in (0,T)$ and we multiply for each $k \in \{1,\ldots,m\}$ the first line of \eqref{od1} by $\overline{g_{k,m}(t)}$, sum up over $k=1,\ldots,m$, and infer from \eqref{e0} that
$$ i \langle \pd_t v_m(t,\cdot) , v_m(t,\cdot) \rangle_{L^2(\Omega)} - a(t;v_m(t,\cdot),v_m(t,\cdot))=\langle F(t,\cdot),v_m(t,\cdot) \rangle_{L^2(\Omega)}.
$$
Taking the imaginary part of both sides of the above identity then yields
$$
 \frac{d}{d s} \| v_m(s,\cdot) \|_{L^2(\Omega)}^2  = 2 \im \langle F(s,\cdot), \pd_t v_m(s,\cdot) \rangle_{L^2(\Omega)},\ s \in (0,T).
$$
Since $v_m(0,\cdot)=0$, we get upon integrating the above identity over $(0,t)$ that
$$ \| v_m(t,\cdot) \|_{L^2(\Omega)}^2 = 2 \im \int_0^t \langle F(s,\cdot),v_m(s,\cdot) \rangle_{L^2(\Omega)} d s \leq
\int_0^t \| F(s,\cdot) \|_{L^2(\Omega)}^2 ds + \int_0^t \| v_m(s,\cdot) \|_{L^2(\Omega)}^2 d s. $$
Therefore, by Gronwall's lemma, there exists a positive constant $c_0$, depending only on $T$, $\Omega$ and $M$ such that we have:
\bel{e1b} 
\| v_m \|_{L^\infty(0,T;L^2(\Omega))} \leq c_0 \| F \|_{L^2(Q)}.
\ee

b) Similarly, by multiplying the first line of \eqref{od1} by $\overline{g_{k,m}'(t)}$, summing up over $k=1,\ldots,m$, and applying \eqref{e0} once more, we get that
$$ i \| \pd_t v_m(t,\cdot) \|_{L^2(\Omega)}^2 - a(t;v_m(t,\cdot), \pd_t v_m(t,\cdot))=\langle F(t,\cdot), \pd_t v_m(t,\cdot) \rangle_{L^2(\Omega)},\ t \in (0,T).
$$
Upon taking this time the real part in the above identity, we find that
$$
a(s;v_m(s,\cdot),\pd_t v_m(s,\cdot)) + a(s;\pd_t v_m(s,\cdot),v_m(s,\cdot)) = - 2 \re \langle F(s,\cdot),\pd_t v_m(s,\cdot) \rangle_{L^2(\Omega)},\ s \in (0,T),
$$
which may be equivalently rewritten as
$$\frac{d}{d s} a(s;v_m(s,\cdot),v_m(s,\cdot)) = a'(s;v_m(s,\cdot),v_m(s,\cdot)) - 2 \re \langle F(s,\cdot),\pd_t v_m(s,\cdot) \rangle_{L^2(\Omega)},\ s \in (0,T).
$$
Now, by integrating with respect to $s$ over $(0,t)$, we obtain that
\bel{e2}
a(t;v_m(t,\cdot),v_m(t,\cdot)) = \int_0^t a'(s;v_m(s,\cdot),v_m(s,\cdot) ) ds - 2 \re \int_0^t \langle F(s,\cdot),\pd_t v_m(s,\cdot) \rangle_{L^2(\Omega)} ds. 
\ee
Next, as $\int_0^t \langle F(s,\cdot),\pd_t v_m(s,\cdot) \rangle_{L^2(\Omega)} ds = \langle F(t,\cdot),v_m(t,\cdot) \rangle_{L^2(\Omega)} -
\int_0^t \langle \pd_t F(s,\cdot),v_m(s,\cdot) \rangle_{L^2(\Omega)} ds$, we get
$$ \left| \int_0^t \langle F(s,\cdot),\pd_t v_m(s,\cdot) \rangle_{L^2(\Omega)} ds \right| \leq \| F(t,\cdot) \|_{L^2(\Omega)} \|  v_m(t,\cdot) \|_{L^2(\Omega)} + \int_0^t \| \pd_t F(s,\cdot) \|_{L^2(\Omega)} \|  v_m(s,\cdot) \|_{L^2(\Omega)} ds,$$
so that $\| v_m(t,\cdot) \|_{H^1(\Omega)}^2$ can be upper bounded with the aid of  \eqref{ell1}, \eqref{e1} and \eqref{e2},  by
the following expression
$$c \left( \int_0^t \| v_m(s,\cdot) \|_{H^1(\Omega)}^2 ds + \int_0^t \| \pd_t F(s,\cdot) \|_{L^2(\Omega)}^2 ds \right) + \frac{2}{\alpha} \| F(t,\cdot) \|_{L^2(\Omega)} \|  v_m(t,\cdot) \|_{L^2(\Omega)} 
+ \frac{\lambda}{\alpha} \| v_m(t,\cdot) \|_{L^2(\Omega)}^2,
$$
where $c$ is a positive constant depending only on $M$. From this, the two basic inequalities
$$ \| F(t,\cdot) \|_{L^2(\Omega)} \|  v_m(t,\cdot) \|_{L^2(\Omega)} \leq \frac{\alpha}{4} \|  v_m(t,\cdot) \|_{L^2(\Omega)}^2 + \frac{1}{\alpha} \| F(t,\cdot) \|_{L^2(\Omega)}^2 $$
and
$$ \| F(t,\cdot) \|_{L^2(\Omega)}^2 = 2 \re \int_0^t \langle F(s,\cdot) , \pd_t F(s,\cdot) \rangle_{L^2(\Omega)} ds \leq \int_0^t \left( \norm{F(s,\cdot)}_{L^2(\Omega)}^2 + \norm{\pd_t F(s,\cdot)}_{L^2(\Omega)}^2 \right) ds, $$
and from the estimate \eqref{e1b}, it then follows that
$$
\| v_m(t,\cdot) \|_{H^1(\Omega)}^2 \leq  2 c  \int_0^t \| v_m(s,\cdot) \|_{H^1(\Omega)}^2 ds + 2 \left( c +\frac{2}{\alpha^2}  + \frac{ c_0^2}{4}+\frac{\lambda c_0^2}{\alpha}\right) \| F \|_{H^1(0,T;L^2(\Omega))}^2.
$$
Then, an application of Gronwall's lemma provides a constant $C=C(T,M,\alpha) \in (0,+\infty)$ such that
\bel{l1b}
\norm{v_m}_{L^\infty(0,T;H^1(\Omega))}\leq C\norm{F}_{H^1(0,T;L^2(\Omega))}.
\ee

c) Further, we put $p(t,x):=|A(t,x)|^2+q(t,x)$ for $(t,x) \in Q$, integrate by parts in the first integral of \eqref{e1} and obtain for all $u, v \in H_0^1(\Omega)$ that
\beas
a'(t;u,v)&=& i \int_\Omega \left( u(x) \pd_t A(t,x) \cdot \overline{\nabla v(x)} - \pd_t A(t,x) \cdot \nabla u(x) \overline{v(x)} \right) dx+\int_\Omega \pd_t p(t,x )u(x) \overline{v(x)}dx\\
&=& \int_\Omega \left( -2i \pd_t A(t,x) \cdot \nabla u(x)+(\pd_t p(t,x) - i \nabla \cdot \pd_t A(t,x) ) u(x) \right) \overline{v(x)}dx.
\eeas
This and \eqref{od2} yield 
\bel{od3}
\left\{\begin{array}{ll}
i \langle \pd_t w_m(t,\cdot),e_k \rangle_{L^2(\Omega)} -a(t;w_m(t,\cdot),e_k) = \langle  F_m(t,\cdot) , e_k \rangle_{L^2(\Omega)}, & t \in (0,T), \\
w_m(0) = 0, &
\end{array}
\right. 
\ee
for all $k=1,\ldots,m$, where
\bel{od4}
F_m(t,x) := -2i \pd_t A(t,x) \cdot \nabla v_m(t,x)+(\pd_t p(t,x) - i \nabla \cdot \pd_t A(t,x) ) v_m(t,x) + \pd_t F(t,x),\ (t,x) \in Q. 
\ee 
Next, multiplying the first line in \eqref{od3} by $g_{k,m}'(t)$ and summing up over $k=1,\ldots,m$, lead to
$$ 
i \langle \pd_t w_m(t,\cdot), w_m(t,\cdot) \rangle_{L^2(\Omega)} -a(t;w_m(t,\cdot),w_m(t,\cdot)) = \langle  F_m(t,\cdot) , w_m(t,\cdot) \rangle_{L^2(\Omega)},\  t \in (0,T).
$$
Therefore, we have
$$ \frac{d}{d s} \norm{w_m(s,\cdot)}_{L^2(\Omega)}^2 = 2 \im \langle  F_m(s,\cdot) , w_m(s,\cdot) \rangle_{L^2(\Omega)},\ s \in (0,T). $$
Now, for each $t \in (0,T)$, we find upon integrating both sides of the above identity over $(0,t)$ that
$$ \norm{w_m(t,\cdot)}_{L^2(\Omega)}^2 \leq \int_0^t \norm{w_m(s,\cdot)}_{L^2(\Omega)}^2 ds + \int_0^t \norm{F_m(s,\cdot)}_{L^2(\Omega)}^2 ds, $$
which, combined with \eqref{od4}, entails
$$ \norm{w_m(t,\cdot)}_{L^2(\Omega)}^2 \leq \int_0^t \norm{w_m(s,\cdot)}_{L^2(\Omega)}^2 ds + c \left( \norm{\pd_t F}_{L^2(Q)}^2 + \norm{v_m}_{L^\infty(0,T;H^1(\Omega))}^2 \right),$$
for some constant $c=c(T,M) \in (0,+\infty)$. Therefore, we have
$$ 
\norm{w_m(t,\cdot)}_{L^2(\Omega)}^2 \leq c \left(  \norm{\pd_t F}_{L^2(Q)}^2 + \norm{v_m}_{L^\infty(0,T;H^1(\Omega))}^2 \right),\ t \in (0,T), 
$$
by Gronwall's lemma, and consequently
\bel{e3}
\norm{w_m}_{L^\infty(0,T;L^2(\Omega))}\leq C \norm{F}_{H^1(0,T;L^2(\Omega))},
\ee
by \eqref{l1b}, where $C$ is another positive constant depending only on $T$, $M$ and $\alpha$. \\

\noindent 2) Having established \eqref{l1b} and \eqref{e3}, we turn now to showing existence of a solution to \eqref{eq2}.
This can be done in accordance with \eqref{l1b} by extracting a subsequence $(v_{m'})_{m'}$ of $(v_m)_m$, which converges to $v \in L^\infty(0,T;H_0^1(\Omega))$ in the weak-star topology of $L^\infty(0,T;H_0^1(\Omega))$. By substituting $m'$ for $m$ in \eqref{od1} and sending $m'$ to infinity, we get that 
\bel{e4}
\left\{\begin{array}{ll}
(i \pd_t + \Delta_A + q) v = F & \mbox{in}\ Q, \\
v(0,\cdot)=0 & \mbox{in}\ \Omega.
\end{array}
\right.
\ee
As a consequence, we have $\pd_t v \in L^\infty(0,T;H^{-1}(\Omega))$ and hence $v  \in L^\infty(0,T;H_0^1(\Omega)) \cap W^{1,\infty}(0,T;H^{-1}(\Omega))$. Further, due to \eqref{e3} and the Banach-Alaoglu theorem, there exists a subsequence of $(w_m)_m$ which converges to $w \in L^\infty(0,T;L^2(\Omega))$ in the weak-star topology of $L^\infty(0,T;L^2(\Omega))$. Since $w_m=\pd_t v_m$ for every $m \in \mathbb{N}^*$ then we necessarily have $\pd_t v = w \in L^\infty(0,T;L^2(\Omega))$ and thus $v\in L^\infty(0,T;H^1_0(\Omega)) \cap W^{1,\infty}(0,T;L^2(\Omega))$. Further, by arguing in the exact same way as in the derivation of \cite[Theorem 8.3 and Remark 10.2, Chapter 3]{LM1}, we get that $v \in \cC([0,T];H^1_0(\Omega))\cap \cC^1([0,T];L^2(\Omega))$. Moreover, for all fixed $t\in[0,T]$, we deduce from \eqref{e4} that $v(t,\cdot)$ is solution to the elliptic boundary value problem
$$\left\{\begin{array}{ll}
\left( \Delta_{A} + q(t,.) \right )v(t,\cdot) = F(t,\cdot)-i\pd_t v(t,\cdot) & \mbox{in}\ \Omega,\\ 
v(t,\cdot)=0, &\mbox{on}\ \pd\Omega.
\end{array}
\right.$$
As $F-i\pd_t v \in \mathcal C([0,T];L^2(\Omega))$ then we have $v\in \cC([0,T],H_0^1(\Omega)\cap H^2(\Omega) )$ and
\eqref{l1a} follows directly from this, \eqref{ell0b}, \eqref{l1b} and \eqref{e3}.
\end{proof}

\begin{remark}
\label{rmk-p}
\begin{enumerate}[a)]
\item With reference to \eqref{e1b}, we point out for further use that the solution $v$ to \eqref{l1} satisfies the estimate
$$ \| v \|_{L^\infty(0,T;L^2(\Omega))} \leq c_0 \| F \|_{L^2(Q)}, $$
for some constant $c_0$ depending only on $\Omega$, $T$ and $M$. 
\item Let $M$, $A$ and $q$ be the same as in Lemma \ref{l1}, and let $F \in H^1(0,T;L^2(\Omega))$ satisfy $F(T,\cdot)=0$. Putting $\tilde{A}(t,x) :=-A(T-t,x)$, $\tilde{q}(t,x):=q(T-t,x)$ and $\tilde{F}(t,x):=\overline{F(T-t,x)}$ for $(t,x) \in Q$, we see that we have
\bel{eq2b}
\left\{\begin{array}{ll}
(i\pd_t +\Delta_{A} + q ) v = F & \mbox{in}\ Q,\\ 
 v(T,\cdot)=0 &\mbox{in}\ \Omega,\\ 
 v=0 &\mbox{on}\ \Sigma,
\end{array}
\right.
\ee
if and only if $\tilde{v}(t,x):=\overline{v(T-t,x)}$ is a solution to the system \eqref{eq2} where $(\tilde{A},\tilde{q},\tilde{F})$ is substituted for $(A,q,F)$. Therefore, by Lemma \ref{l1}, there exists a unique solution $v \in \cC([0,T],H_0^1(\Omega) \cap H^2(\Omega)) \cap \cC^1([0,T],L^2(\Omega))$ to \eqref{eq2b}, and it is clear that $v$ verifies the estimate \eqref{l1a}.
\end{enumerate}
\end{remark}

\subsection{Completion of the proof of Proposition \ref{p1}} 
\label{sec-p1}
In light of  \cite[Theorem 2.3, Chapter 4]{LM2} there exists $G \in H^{3,2}(Q)$ satisfying
$$G(0,\cdot)=\pd_t G(0,\cdot)=0\ \mbox{in}\ \Omega\ \mbox{and}\ G=g\ \mbox{on}\ \Gamma,$$
and
\bel{p1c} 
\norm{G}_{H^{3,2}(Q)}\leq C\norm{g}_{\cH(\Sigma)}, 
\ee
for some positive constant $C$, depending only on $\Omega$ and $T$. Therefore, the function
\bel{e5}
F:=-(i\pd_t+\Delta_{A}+q)G \in H^1(0,T;L^2(\Omega))
\ee 
verifies $F(0,\cdot)=0$ in $\Omega$. Let $v$ be the $\cC([0,T],H_0^1(\Omega)\cap H^2(\Omega) )\cap \cC^1([0,T],L^2(\Omega))$-solution to \eqref{eq2}, associated with the source term $F$ defined by \eqref{e5}, which is given by Lemma \ref{l1}.
Then, $u:=G+v\in H^{1,2}(Q)$ is a solution to \eqref{1.1} and \eqref{p1b} follows directly from \eqref{l1a} and \eqref{p1c}.
Finally, we get that such a solution is unique by applying \eqref{p1b} with $g=0$.

\section{GO solutions}
\setcounter{equation}{0}
\label{sec-GO}
In this section we build appropriate geometric optics (GO) solutions to the magnetic Schr\"odinger equation in $Q$, which are used in the derivation of the stability estimates of Theorem \ref{t1}, presented in Sections \ref{sec-sema} and \ref{sec-seel}.

Namely, given $A_j\in W^{5,\infty}(Q)^n\cap H^6(Q)^n$ and $q_j\in W^{4,\infty}(Q)$, $j=1,2$, fulfilling the conditions \eqref{cond1}-\eqref{cond3}, we seek a solution $u_j$ to the magnetic Schr\"odinger equation
\bel{GO1}
(i\pd_t +\Delta_{A_j} +q_j ) u_j=0\ \mbox{in}\ Q,
\ee
of the form
\bel{GO2} 
u_j(t,x)= \varphi_{\sigma}(t,x) \left(u_{j,1}(t,x)+ \sigma^{-1} u_{j,2}(t,x)  \right)+r_{j,\sigma}(t,x)\ \mbox{with}\ \varphi_{\sigma}(t,x):=e^{i\sigma(-\sigma t+x\cdot\omega)}.
\ee
Here $\sigma \in (1,+\infty)$ and $\omega \in \mathbb S^{n-1} :=\{y\in\R^n;\ |y|=1\}$ are arbitrarily fixed and the remainder term $r_{j,\sigma}$ in the asymptotic expansion of $u_j$ with respect to $\sigma^{-1}$, scales at most like $\sigma^{-1}$ as $\sigma\to+\infty$, in a sense that we will make precise below. Moreover, we impose that $u_{j,1}$ and $u_{j,2}$ be in $H^{3}(Q)$, and that they satisfy 
\bel{GO3}
\omega \cdot \nabla_{A_j} u_{j,1}  = 0\ \mbox{and}\ 2i  \omega \cdot \nabla_{A_j} u_{j,2} + (i\pd_t +\Delta_{A_j} +q_j )u_{j,1}=0\
\mbox{in}\ Q,
\ee
and
\bel{GO5} 
\exists \delta \in \left( 0, \frac{T}{4} \right),\ \left( t \in (0,\delta) \cup (T-\delta,T) \right) \Rightarrow \left( u_{j,1}(t,x)=u_{j,2}(t,x)=0,\ x \in \Omega \right).
\ee
The two conditons in \eqref{GO3} can be understood from the formal commutator formula
$[ i \pd_t + \Delta_{A_j} , \varphi_{\sigma} ]= i \pd_t \varphi_{\sigma} + \Delta \varphi_{\sigma} + 2 \nabla \varphi_{\sigma} \cdot \nabla_{A_j} = 2 i \sigma \varphi_{\sigma}  \omega \cdot \nabla_{A_j}$, entailing
\beas
& & (i\pd_t+\Delta_{A_j}+q_j) (u_j-r_{j,\sigma}) 
=  (i\pd_t+\Delta_{A_j}+q_j)  \varphi_{\sigma} ( u_{j,1} + \sigma^{-1} u_{j,2} )\\
& = & \varphi_{\sigma} \left( 2 i \sigma \omega \cdot \nabla_{A_j} u_{j,1} + (i\pd_t+\Delta_{A_j}+q_j) u_{j,1} + 2i \omega \cdot \nabla_{A_j} u_{j,2} + \sigma^{-1} (i \pd_t +\Delta_{A_j}+q_j)u_{j,2} \right)
\eeas
in $Q$.
This and \eqref{GO1}-\eqref{GO2} lead to defining $r_{1,\sigma}$ by
\bel{GO61}
\left\{\begin{array}{ll}
(i\pd_t +\Delta_{A_1} + q_1) r_{1,\sigma} = -\sigma^{-1} \varphi_{\sigma} (i\pd_t+\Delta_{A_1}+q_1)u_{1,2} & \mbox{in}\ Q,\\ 
r_{1,\sigma}(0,\cdot)=0 &\mbox{in}\ \Omega,\\ 
r_{1,\sigma}=0 &\mbox{on}\ \Sigma,
\end{array}
\right.
\ee
and $r_{2,\sigma}$ by
\bel{GO62}
\left\{\begin{array}{ll}
(i\pd_t +\Delta_{A_2} + q_2) r_{2,\sigma} = -\sigma^{-1} \varphi_{\sigma} (i\pd_t+\Delta_{A_2}+q_2)u_{2,2} & \mbox{in}\ Q,\\ 
r_{2,\sigma} (T,\cdot)=0 &\mbox{in}\ \Omega,\\ 
r_{2,\sigma} =0 &\mbox{on}\ \Sigma.
\end{array}
\right.
\ee
The initial condition in \eqref{GO61} and the final condition in \eqref{GO62} are imposed in such a way that the product $r_{1,\sigma} \overline{r_{2,\sigma}}$ vanishes at both ends of the time interval $(0,T)$.

The first step of the construction of the functions $u_{j,k}$, for $j,k=1,2$, involves extending the two magnetic potentials $A_1$ and $A_2$ to $(0,T) \times \R^n$ as follows. First, we refer to \cite[ Theorem 5 in Section 3]{St} and pick a magnetic potential $\tilde{A}_1\in W^{5,\infty}((0,T)\times\R^n;\R)^n \cap H^6((0,T)\times\R^n;\R)^n$ which coincides with $A_1$ in $Q$ and satisfies
$$ \exists R \in (0,+\infty),\ \forall t \in [0,T],\ \supp\ \tilde{A}_1(t,\cdot) \subset  \{x\in\R^n,\ |x| \leq R\} $$
and the estimate
\bel{g0} 
\norm{\tilde{A}_1}_{W^{5,\infty}((0,T)\times\R^n)^n}\leq C\norm{A_1}_{W^{5,\infty}(Q)^n}\ \mbox{and}\ \norm{\tilde{A}_1}_{H^6((0,T)\times\R^n)^n}\leq C\norm{A_1}_{H^6(Q)^n},
\ee
for some positive constant $C$ depending only on $T$ and $\Omega$. 
Thus, putting
\bel{A2}
\tilde{A}_2(t,x) :=
\left\{
\begin{array}{ll} 
A_2(t,x) & \mbox{if}\ x \in \Omega,\\ \tilde{A}_1(t,x) & \mbox{if}\ x \in \R^n \setminus \Omega, 
\end{array}
\right.
\ee
we infer from \eqref{cond1} that $\tilde{A}_2\in W^{5,\infty}((0,T)\times\R^n)^n\cap H^6((0,T)\times\R^n)^n$. Moreover, it is clear from \eqref{g0}-\eqref{A2} upon possibly substituting $\max(1,C)$ for $C$ in \eqref{g0}, that  
\bel{es}
\norm{\tilde{A}_j}_{W^{5,\infty}((0,T)\times\R^n)^n}\leq C\max_{k=1,2}\norm{A_k}_{W^{5,\infty}(Q)^n}\ \mbox{and}\ \norm{\tilde{A}_j}_{H^6((0,T)\times\R^n)^n}\leq C\max_{k=1,2}\norm{A_k}_{H^6(Q)^n},\ j=1,2. \ee

The next step is to introduce two functions, the first one $\chi=\chi_\delta \in \mathcal C^\infty(\R;[0,1])$, being supported in
$(\delta,T-\delta)$, satisfies $\chi(t)=1$ if $t \in [2\delta,T-2\delta]$ and fulfills
$$ \forall k \in {\mathbb N},\ \exists C_k \in (0,+\infty),\ \norm{\chi}_{W^{k,\infty}(\R)} \leq C_k \delta^{-k}, $$
whereas the second one is defined for $\tau \in \R$, $\xi \in \omega^{\bot}:=\{ x\in\R^n;\ x \cdot \omega=0 \}$ and $y\in \mathbb S^{n-1}\cap\omega^{\bot}$, by
\bel{GO8}
\beta(t,x) :=y \cdot \nabla \left( e^{-i(t \tau + \xi\cdot x)} \exp \left( -i \int_\R A(t,x+s\omega) \cdot \omega ds \right) \right),\ (t,x) \in (0,T) \times \R^n.
\ee
Here we have set $A:=\tilde{A}_1-\tilde{A}_2$ (that is $A=A_1-A_2$ in $Q$ and $A=0$ in $\left( (0,T)\times\R^n \right) \setminus Q$) in such a way that we have $\omega \cdot \nabla \beta(t,x)=0$ for all $(t,x) \in (0,T) \times \R^n$. 

Now, a direct calculation shows that each of the two functions
$$
u_{1,1}(t,x)  := \chi(t) \beta(t,x) \exp \left(i\int_0^{+\infty}\tilde{A}_1(t,x+s\omega)\cdot\omega ds\right)
$$
and
$$
u_{2,1}(t,x) := \chi(t) \exp\left(i\int_0^{+\infty}\tilde{A}_2(t,x+s\omega)\cdot\omega ds\right)
$$
is a solution to the first equation of \eqref{GO3} satisfying the condition \eqref{GO5}. Further, it follows from this,\eqref{cond3} and \eqref{es}-\eqref{GO8} that
\bel{GO101}
\norm{u_{1,1}}_{H^3(Q)} \leq C \langle \tau,\xi \rangle^4 \delta^{-3}\ \mbox{and}\ \norm{u_{2,1}}_{H^3(Q)} \leq C\delta^{-3},
\ee
where $C$ denotes a positive constant depending only $\Omega$, $T$ and $M$, which may change from line to line, and $\langle \tau,\xi \rangle$ is a shorthand for $(1+\tau^2+\xi^2)^{1 \slash 2}$.

Similarly, using that any $x\in\R^n$ decomposes into the sum $x=x_\bot+s\omega$ with $s:=x\cdot\omega$ and $x_\bot:=x-s\omega\in\omega^\bot$, it can be checked through standard computations that
$$
u_{j,2}(t,x_\bot+s\omega) :=-\frac{1}{2i}\int_0^s \exp \left( -i\int_{s_1}^{s}\tilde{A}_j(t,x_\bot+s_2\omega) \cdot \omega d s_2 \right) (i\pd_t+\Delta_{A_j}+q_j)u_{j,1}(t,x_\bot+s_1\omega) d s_1,\ j=1,2,
$$
is a solution to the second equation of \eqref{GO3} obeying the condition \eqref{GO5}.
Further, by \eqref{cond3} and \eqref{es} we have
\bel{GO102}
\norm{u_{1,2}}_{H^3(Q)}\leq C \langle \tau,\xi \rangle^6 \delta^{-4}\ \mbox{and}\ \norm{u_{2,2}}_{H^3(Q)}\leq C \delta^{-4},
\ee
and
\bel{GO102b}
\norm{(i\pd_t +\Delta_{A_1} + q_1)u_{1,2}}_{L^2(Q)} \leq C \langle \tau,\xi \rangle^5 \delta^{-2}\ \mbox{and}\ 
\norm{(i\pd_t +\Delta_{A_2} + q_2)u_{2,2}}_{L^2(Q)}\leq C\delta^{-2}.
\ee

Having specified $u_{j,k}$ for $j, k=1,2$, we turn now to examining the remainder term $r_{j,\sigma}$. 
We first infer from Lemma \ref{l1} (resp., Statement b) of Remark \ref{rmk-p}) that $r_{1,\sigma}$ (resp. $r_{2,\sigma}$) is well defined as the $\cC([0,T],H_0^1(\Omega)\cap H^2(\Omega) )\cap \cC^1([0,T],L^2(\Omega))$-solution to \eqref{GO61} (resp., \eqref{GO62}). Next, Statement a) in Remark \ref{rmk-p} and \eqref{GO102b} yield
\bel{RG}
\norm{r_{1,\sigma}}_{L^2(Q)}\leq C \norm{(i\pd_t +\Delta_{A_1} + q_1 )u_{1,2}}_{L^2(Q)} \sigma^{-1} \leq C \langle \tau,\xi \rangle^5 \delta^{-2} \sigma^{-1}.
\ee
On the other hand, we know from \eqref{l1a} and \eqref{GO102} that
$$
\norm{r_{1,\sigma}}_{L^2(0,T;H^2(\Omega))} \leq C \norm{e^{i\sigma(-\sigma t+x\cdot\omega)}(i\pd_t +\Delta_{A_1} + q_1)u_{1,2}}_{H^1(0,T;L^2(\Omega))} \sigma^{-1} \leq C \langle \tau,\xi \rangle^6 \delta^{-3} \sigma,
$$
Thus, interpolating with \eqref{RG}, we have $\norm{r_{1,\sigma}}_{L^2(0,T;H^1(\Omega))}\leq C \langle \tau,\xi \rangle^6 \delta^{-3}$ and hence
\bel{GO111} 
\norm{r_{1,\sigma}}_{L^2(0,T;H^1(\Omega))}+\sigma\norm{r_{1,\sigma}}_{L^2(Q)}\leq C \langle \tau,\xi \rangle^6 \delta^{-3}.
\ee
Analogously, we establish that
\bel{GO112} 
\norm{r_{2,\sigma}}_{L^2(0,T;H^1(\Omega))}+\sigma\norm{r_{2,\sigma}}_{L^2(Q)}\leq C\delta^{-3}.
\ee
Having built $u_{j,k}$ and $r_{j,\sigma}$, for $j,k=1,2$, fulfilling \eqref{GO1}--\eqref{GO5}, we are now in position to derive the stability estimates \eqref{t1a}-\eqref{t1b} of Therorem \ref{t1}.

\section{Proof of the stability estimate \eqref{t1a}}
\setcounter{equation}{0}
\label{sec-sema}

We stick to the notations of Section \ref{sec-GO} and recall from \eqref{GO5} that
\bel{GO12}
u_1(0,x)=u_2(T,x)=0,\ x\in \Omega.
\ee
The proof of \eqref{t1a} boils down to a suitable estimate of the Fourier transform of the function $\chi^2 A$, presented in Lemma
\ref{l3}.

\subsection{Estimation of the Fourier transform of $\chi^2 A$}
We start by proving the following technical estimate.
\begin{lemma}
\label{l2} 
There exists a constant $C=C(T,\Omega,M) \in (0,+\infty)$ such that we have
\bea
& & \abs{\int_{\R^{n+1}} \chi^2(t) \beta(t,x) e^{i\int_0^{+\infty}A(t,x+s\omega)\cdot\omega ds} A(t,x)\cdot\omega dxdt} \nonumber \\
&\leq & C \left( \langle \tau,\xi \rangle^6  \delta^{-8}  \sigma^5 \norm{\Lambda_{A_1,q_1}-\Lambda_{A_2,q_2}}  +  \langle \tau,\xi \rangle^8 \delta^{-6} \sigma^{-1} \right), \label{l2a}
\eea
uniformly in $\xi \in \omega^\perp$.
\end{lemma}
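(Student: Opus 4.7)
The plan is to derive \eqref{l2a} by combining the GO solutions built in Section~\ref{sec-GO} with the classical integral identity connecting the potential difference to the Dirichlet-to-Neumann map difference. First I set $f := u_1\vert_\Sigma$; the function $f$ belongs to $\cH(\Sigma)$ because \eqref{GO5} forces $u_{1,1}$ and $u_{1,2}$ (together with their $t$-derivatives) to vanish for $t \in [0,\delta]$, and because $r_{1,\sigma}$ likewise vanishes identically for $t \in [0,\delta)$ by uniqueness in \eqref{GO61}. I then let $v$ be the $H^{1,2}(Q)$-solution of \eqref{1.1} with coefficients $(A_2,q_2)$ and boundary data $f$, provided by Proposition~\ref{p1}, and put $w := v - u_1$. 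The function $w$ satisfies $w(0,\cdot)=0$ in $\Omega$, $w_{\vert\Sigma}=0$, and, using \eqref{cond2} to kill the divergence term,
\begin{equation*}
(i\pd_t + \Delta_{A_2} + q_2) w = \mathcal L u_1\ \mbox{in}\ Q,\quad \mathcal L := 2iA\cdot\nabla - (A_1+A_2)\cdot A + (q_1-q_2),\quad A := A_1-A_2.
\end{equation*}

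I next test this equation against $\overline{u_2}$, integrate over $Q$, and use the self-adjointness of $\Delta_{A_2}$ on $L^2(\Omega)$ for real $A_2$ together with $w(0,\cdot)=0$, the terminal condition $u_2(T,\cdot)=0$ (coming from \eqref{GO5} and $r_{2,\sigma}(T,\cdot)=0$ in \eqref{GO62}), and $w_{\vert\Sigma}=0$. This yields the identity
\begin{equation*}
\int_Q \mathcal L u_1 \cdot \overline{u_2}\, dx\, dt = \int_\Sigma \pd_\nu w \cdot \overline{u_2}\, d\sigma\, dt = \int_\Sigma (\Lambda_{A_2,q_2} - \Lambda_{A_1,q_1}) f \cdot \overline{u_2}\, d\sigma\, dt,
\end{equation*}
where the last equality uses \eqref{cond1} to absorb the multipliers $iA_j\cdot\nu$ into the conormal derivatives. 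Substituting the ansatz $u_j = \varphi_\sigma(u_{j,1} + \sigma^{-1}u_{j,2}) + r_{j,\sigma}$ into the left-hand side and exploiting $\nabla\varphi_\sigma = i\sigma\omega\varphi_\sigma$ and $|\varphi_\sigma|^2 = 1$, the dominant contribution is $-2\sigma \int_Q (A\cdot\omega) u_{1,1}\overline{u_{2,1}}\, dx\, dt$. The explicit formulas for $u_{1,1}, u_{2,1}$ from Section~\ref{sec-GO} give $u_{1,1}\overline{u_{2,1}} = \chi^2(t) \beta(t,x)\exp\bigl(i\int_0^{+\infty} A(t,x+s\omega)\cdot\omega\, ds\bigr)$, and because $A$ vanishes outside $Q$, this integral equals $-2\sigma$ times the left-hand side of \eqref{l2a}.

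After dividing by $-2\sigma$, all remaining terms are bounded in two groups. The interior remainders -- arising from the $\sigma^{-1}u_{j,2}$ corrections, from the $\varphi_\sigma \nabla U_j$ contributions as opposed to $\nabla\varphi_\sigma \cdot U_j$ (where $U_j := u_{j,1}+\sigma^{-1}u_{j,2}$), from the zeroth-order part of $\mathcal L$, and from all products involving $r_{1,\sigma}$ or $r_{2,\sigma}$ -- are controlled via the Cauchy--Schwarz inequality and the estimates \eqref{GO101}--\eqref{GO112}, yielding the $\langle\tau,\xi\rangle^8 \delta^{-6} \sigma^{-1}$ contribution. The boundary term is bounded by $\|\Lambda_{A_1,q_1}-\Lambda_{A_2,q_2}\|\,\|f\|_{\cH(\Sigma)}\,\|u_2\|_{L^2(\Sigma)}$; here $\|u_2\|_{L^2(\Sigma)}$ stays bounded in $\sigma$ (since $|\varphi_\sigma|=1$ and $r_{2,\sigma}\vert_\Sigma = 0$), and $\|f\|_{\cH(\Sigma)}$ is estimated by differentiating $\varphi_\sigma u_{1,k}$ on $\Sigma$ (each $\pd_t$ producing $\sigma^2$ and each spatial derivative producing $\sigma$) and combining with the $\langle\tau,\xi\rangle,\delta$-weights of \eqref{GO101}--\eqref{GO102}. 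Upper bounding $H^{9/4,3/2}(\Sigma)$ by an integer-order norm produces a factor $\sigma^6$, which after the $\sigma^{-1}$ becomes the $\sigma^5$ of \eqref{l2a}.

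The main technical obstacle is precisely this bookkeeping of the three parameters $\sigma$, $\langle\tau,\xi\rangle$, and $\delta^{-1}$ across all the remainder integrals and in $\|f\|_{\cH(\Sigma)}$: many cross-terms appear when expanding $\mathcal L u_1 \cdot \overline{u_2}$, and each must be estimated separately so that the resulting powers fit within the stated exponents $\sigma^5\langle\tau,\xi\rangle^6 \delta^{-8}$ in the DN-term and $\sigma^{-1}\langle\tau,\xi\rangle^8\delta^{-6}$ in the error term.
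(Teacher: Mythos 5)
Your proposal is correct and follows essentially the same route as the paper: you introduce the auxiliary solution of the $(A_2,q_2)$-problem with boundary data $u_1\vert_\Sigma$, apply the same Green-type identity to $w$ tested against $\overline{u_2}$, isolate the dominant term $-2\sigma\int_Q (A\cdot\omega)\,u_{1,1}\overline{u_{2,1}}\,dx\,dt$, and control the remainders via \eqref{GO101}--\eqref{GO112} and the boundary term via $\norm{\Lambda_{A_1,q_1}-\Lambda_{A_2,q_2}}$, the trace bound on $\norm{u_1}_{\cH(\Sigma)}$ and the $\sigma$-independent bound on $\norm{u_2}_{L^2(\Sigma)}$, exactly as in the paper's proof (your early use of \eqref{cond2} to drop $i\nabla\cdot A$ is an immaterial variant of the paper keeping it inside $V$). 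The parameter bookkeeping you indicate ($\sigma^6$ from the $\cH(\Sigma)$-norm, hence $\sigma^5$ after dividing by $\sigma$, and $\langle\tau,\xi\rangle^8\delta^{-6}\sigma^{-1}$ for the remainders) matches the stated exponents.
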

\begin{proof} 
For $j=1,2$, put $\psi_{j,\sigma}:=u_j-r_{j,\sigma}=u_j$ in $\Sigma$ and let $v_2$ be the $H^{1,2}(Q)$-solution to
$$
\left\{
\begin{array}{ll}
(i\pd_t +\Delta_{A_2}+q_2) v_2=0 & \mbox{in}\ Q,\\
v_2(0,\cdot)=0 & \mbox{in}\ \Omega,\\
v_2=\psi_{1,\sigma} & \mbox{on}\ \Sigma,
\end{array}
\right.
$$
given by Proposition \ref{p1}. In light of \eqref{GO1} the function $w:=v_2-u_1$ then solves
$$
\left\{
\begin{array}{ll}
(i\pd_t +\Delta_{A_2}+q_2) w= 2iA\cdot \nabla u_1+ V u_1 & \mbox{in}\ Q,\\
w(0,\cdot)=0 & \mbox{in}\ \Omega,\\
w=0 & \mbox{on}\ \Sigma,
\end{array}
\right.$$
with $V: =i \nabla \cdot A -(|A_1|^2-|A_2|^2)+q_1-q_2$. Next, by multiplying the first equation of the above system by $\overline{u_2}$ and integrating by parts over $Q$, we deduce from \eqref{GO1} and \eqref{GO12} that
\bel{l2c}
\int_Q (2i A \cdot \nabla + V)u_1(t,x) \overline{u_2(t,x)} dxdt=\int_{\Sigma} \pd_\nu w(t,x) \overline{u_2(t,x)} d\sigma(x)dt.
\ee
Further, since $(\pd_\nu + i A_2 \cdot \nu) v_2 = \Lambda_{A_2,q_2} \psi_{1,\sigma}$ and
$(\pd_\nu  + i A_1 \cdot \nu) u_1 = \Lambda_{A_1,q_1} \psi_{1,\sigma}$, we have $\pd_\nu w = (\Lambda_{A_2,q_2}-\Lambda_{A_1,q_1}) \psi_{1,\sigma}$ in virtue of \eqref{cond1}, and hence 
\bea
\abs{\int_{\Sigma}\pd_\nu w(t,x) \overline{u_2(t,x)}d\sigma(x)dt} & \leq & 
\norm{(\Lambda_{A_2,q_2}-\Lambda_{A_1,q_1})\psi_{1,\sigma}}_{L^2(\Sigma)} \norm{\psi_{2,\sigma}}_{L^2(\Sigma)} \nonumber \\
&\leq & C\norm{\Lambda_{A_1,q_1}-\Lambda_{A_2,q_2}}\norm{\psi_{1,\sigma}}_{\cH(\Sigma)}\norm{\psi_{2,\sigma}}_{L^2(\Sigma)} \nonumber \\
&\leq & C\norm{\Lambda_{A_2,q_2}-\Lambda_{A_2,q_2}}  \langle \tau,\xi \rangle^6 \delta^{-8} \sigma^6, \label{l2d}
\eea
by Corollary \ref{cor}, the continuity of the trace operator from $H^3(Q)$ into $\cH(\Sigma)$, \eqref{GO2} and the estimates \eqref{GO101}-\eqref{GO102}.  

On the other hand, we know from \eqref{GO2} that
\bea
\int_Q (2iA\cdot\nabla +V)u_1 \overline{u_2}(t,x)dxdt
& = & -2 \sigma \int_Q (A \cdot \omega) u_{1,1}\overline{u_{2,1}}(t,x)dxdt + r \nonumber \\
& = & -2 \sigma \int_Q \chi^2(t)\beta(t,x)e^{i\int_0^{+\infty}A(t,x+s\omega)\cdot\omega ds} A(t,x)\cdot\omega dxdt + r, \label{l2e}
\eea
where
\beas
r & := & -2 \sigma \int_Q u_{1,1} \left( \overline{u_{2,2}} \sigma^{-1} + \varphi_{\sigma} \overline{r_{2,\sigma}} \right) A(t,x) \cdot \omega dx dt \nonumber \\
& & -2  \int_Q A \cdot \left( \varphi_{\sigma} \left( \nabla u_{1,1} + \nabla u_{1,2}  \sigma^{-1} \right) + \nabla r_{1,\sigma} \right) \overline{u_2}(t,x) dx dt + \int_Q V u_1 \overline{u_2}(t,x) dx dt.
\eeas
Since $\abs{r} \leq C \langle \tau,\xi \rangle^8 \delta^{-6}$ by \eqref{GO111}-\eqref{GO112}, it then follows from \eqref{l2e} that
\beas
& & \sigma^{-1} \abs{\int_Q (2iA\cdot\nabla +V)u_1\overline{u_2}(t,x) dxdt}\\
& \geq & 2 \abs{\int_Q \chi^2(t) \beta(t,x)e^{i\int_0^{+\infty}A(t,x+s\omega)\cdot\omega ds} A(t,x) \cdot\omega dxdt}
-C \langle \tau,\xi \rangle^8 \delta^{-6} \sigma^{-1},
\eeas
which, combined with \eqref{l2c}-\eqref{l2d}, yields \eqref{l2a}.
\end{proof}

Having established Lemma \ref{l2} we may now estimate the Fourier transform of $\chi A$.
We recall that the Fourier transform of a function $f \in L^1(\R^{1+n})^n$ is defined for all $(\tau,\xi) \in \R \times \R^n$ by
$$ \hat{f}(\tau,\xi) :=(2\pi)^{-{n+1\over2}}\int_{\R^{1+n}}e^{-i(t\tau+x\cdot\xi)} f(t,x)dxdt.$$

\begin{lemma}
\label{l3} 
There exists a positive constant $C$, depending only on $T$, $\Omega$ and $M$, such that the inequality
\bel{l3a}
|\xi| \abs{\widehat{\chi^2A}(\tau,\xi)}\leq C \left( \langle \tau,\xi \rangle^6  \delta^{-8} \sigma^5 \norm{\Lambda_{A_1,q_1}-\Lambda_{A_2,q_2}} + \langle \tau,\xi \rangle^8 \delta^{-6} \sigma^{-1} \right),
\ee
holds for any $(\tau,\xi)\in \R^{1+n}$.
\end{lemma}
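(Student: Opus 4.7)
The plan is to identify the integral on the left-hand side of \eqref{l2a} with an explicit scalar multiple of $\omega\cdot\widehat{\chi^2 A}(\tau,\xi)$ by performing three successive integrations by parts, and then exploit the freedom in $\omega$ and $y$ combined with the divergence-free gauge condition \eqref{cond2} to pass from a scalar projection to the full length of $\widehat{\chi^2 A}(\tau,\xi)$. Decompose $x = x_\perp + r\omega$ with $r = x\cdot\omega$ and $x_\perp\in\omega^\perp$, and put $\Psi(t,x) := \int_0^{+\infty} A(t,x+s\omega)\cdot\omega\,ds = \int_r^{+\infty} A(t,x_\perp+s'\omega)\cdot\omega\,ds'$. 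Since $\xi,y\in\omega^\perp$, the function $\beta$ of \eqref{GO8} depends only on $(t,x_\perp)$, while $\partial_r\Psi = -A(t,x)\cdot\omega$ yields $e^{i\Psi}A\cdot\omega = i\partial_r e^{i\Psi}$. As $A=\tilde A_1-\tilde A_2$ vanishes outside $\overline{Q}$ by \eqref{A2}, one has $\Psi(t,x_\perp+r\omega)\to 0$ as $r\to+\infty$ and $\Psi(t,x_\perp+r\omega)\to\Psi_0(t,x_\perp):=\int_\R A(t,x_\perp+s\omega)\cdot\omega\,ds$ as $r\to-\infty$, so carrying out the $r$-integral first produces
\begin{equation*}
\int_{\R^{n+1}} \chi^2(t)\beta(t,x)e^{i\Psi(t,x)} A(t,x)\cdot\omega\,dxdt = i\int_\R\int_{\omega^\perp} \chi^2(t)\beta(t,x_\perp)\bigl(1-e^{i\Psi_0(t,x_\perp)}\bigr)dx_\perp dt.
\end{equation*}

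Writing $\beta = y\cdot\nabla_\perp\bigl(e^{-i(t\tau+\xi\cdot x_\perp)}e^{-i\Psi_0}\bigr)$ and integrating by parts in $x_\perp$ transfers $y\cdot\nabla_\perp$ onto $(1-e^{i\Psi_0})$, contributing $-ie^{i\Psi_0}(y\cdot\nabla_\perp\Psi_0)$ whose exponential cancels $e^{-i\Psi_0}$. Since $y\in\omega^\perp$, $y\cdot\nabla_\perp\Psi_0 = \int_\R y\cdot\nabla_x(A\cdot\omega)(t,x_\perp+s\omega)\,ds$, and combining $dx_\perp\,ds = dx$ with $\xi\cdot x = \xi\cdot x_\perp$ reconstitutes an integral over $\R^{n+1}$. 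A final integration by parts in $x$ moves $y\cdot\nabla$ onto $e^{-i\xi\cdot x}$, producing the factor $-i(y\cdot\xi)$ and the Fourier transform of $\chi^2 A\cdot\omega$. Altogether,
\begin{equation*}
\int_{\R^{n+1}} \chi^2(t)\beta(t,x)e^{i\Psi(t,x)} A(t,x)\cdot\omega\,dxdt = -i(y\cdot\xi)(2\pi)^{(n+1)/2}\,\omega\cdot\widehat{\chi^2 A}(\tau,\xi).
\end{equation*}

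Injecting this identity into \eqref{l2a} yields, uniformly in $\omega\in\s^{n-1}$ with $\xi\in\omega^\perp$ and $y\in\s^{n-1}\cap\omega^\perp$,
\begin{equation*}
|y\cdot\xi|\,|\omega\cdot\widehat{\chi^2 A}(\tau,\xi)| \leq C\bigl(\langle\tau,\xi\rangle^6\delta^{-8}\sigma^5\norm{\Lambda_{A_1,q_1}-\Lambda_{A_2,q_2}}+\langle\tau,\xi\rangle^8\delta^{-6}\sigma^{-1}\bigr).
\end{equation*}
The case $\xi=0$ is trivial, so assume $\xi\ne 0$ and set $y := \xi/|\xi|$, which lies in $\omega^\perp$ as $\omega\perp\xi$; then $|y\cdot\xi|=|\xi|$. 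Conditions \eqref{cond1}--\eqref{cond2} make $A$ vanish on $\p\Omega$ and divergence-free in $\Omega$, so after the extension \eqref{A2} one has $\nabla\cdot A = 0$ distributionally on $\R^n$, and consequently $\xi\cdot\widehat{\chi^2 A}(\tau,\xi)=0$. The vector $\widehat{\chi^2 A}(\tau,\xi)\in\C^n$ therefore lies in the complexification of $\xi^\perp\subset\R^n$, and $|\widehat{\chi^2 A}(\tau,\xi)|^2 = \sum_{k=1}^{n-1}|\omega_k\cdot\widehat{\chi^2 A}(\tau,\xi)|^2$ for any real orthonormal basis $(\omega_k)$ of $\xi^\perp$. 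Summing the previous bound over this basis delivers \eqref{l3a}, the factor $\sqrt{n-1}$ being absorbed into the constant.

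The only delicate point, beyond bookkeeping of constants, is ensuring that no surface contributions arise in any of the three integrations by parts. This is guaranteed by the compact $x$-support of $A$ forced by \eqref{A2}, which both fixes the boundary value $\Psi\to 0$ at $r=+\infty$ (so the $r$-boundary term equals $1-e^{i\Psi_0}$ as claimed) and kills all surface integrals at spatial infinity.
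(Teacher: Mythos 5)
Your argument is correct and follows essentially the same route as the paper: integrate along the $\omega$-direction to turn $e^{i\Psi}A\cdot\omega$ into the boundary factor $1-e^{i\Psi_0}$, integrate by parts in $x_\perp$ so the exponentials cancel, reassemble an $\R^{n+1}$ integral and pick $y=\xi/|\xi|$ to produce the factor $|\xi|\,\abs{\omega\cdot\widehat{\chi^2A}(\tau,\xi)}$, then use $\nabla\cdot A=0$ and an orthonormal basis of $\xi^{\perp}$ to recover the full vector norm from Lemma \ref{l2}. The only (harmless) deviations are keeping $y$ general until the end and your slightly more careful justification of the absence of boundary terms and of the distributional divergence-free extension.
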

\begin{proof}
The estimate \eqref{l3a} being obviously true for $\xi =0$, we will solely focus on the case $\xi\neq 0$.
We use the decomposition $x=x_\bot+ \kappa \omega$, where $\kappa:=x\cdot\omega$ and $x_\bot :=x- \kappa \omega$, and recall from \eqref{GO8} that we have $\beta(t,x)=\beta(t,x_\bot)$, so we obtain
\bea
& & \int_{\R^{1+n}} \chi^2(t) \beta(t,x) e^{i\int_0^{+\infty}A(t,x+s\omega)\cdot\omega ds} A(t,x)\cdot\omega dxdt \nonumber \\
& = &  \int_\R \int_\R \int_{\omega^\bot} \chi^2(t) \beta(t,x_\bot) e^{i\int_{\kappa}^{+\infty} A(t,x_\bot+s\omega)\cdot\omega ds} A(t,x_\bot+\kappa\omega)\cdot\omega  dx_\bot d\kappa dt \nonumber \\
& = & i \int_\R \int_{\omega^\bot} \chi^2(t) \beta(t,x_\bot) \left( \int_\R \pd_{\kappa} e^{i\int_{\kappa}^{+\infty} A(t,x_\bot+s\omega)\cdot\omega ds} d\kappa \right)  dx_\bot  dt \nonumber \\
& = & i \int_\R \chi^2(t) \left( \int_{\omega^\bot} \beta(t,x_\bot) \left(1- e^{i\int_\R A(t,x_\bot+s\omega)\cdot\omega ds} \right) dx_\bot\right) dt \nonumber \\
&= & i \int_\R\chi^2(t)e^{-it\tau}\left(\int_{\omega^\bot}y\cdot\nabla \left( e^{-i\xi\cdot x_\bot}e^{-i\int_\R A(t,x_\bot+s\omega)\cdot\omega ds}\right)\left(1- e^{i\int_\R A(t,x_\bot+s\omega)\cdot\omega ds}\right)dx_\bot\right)dt. \label{q1}
\eea
Next, as we have
\beas
& & \int_{\omega^\bot}y\cdot\nabla \left( e^{-i\xi\cdot x_\bot}e^{-i\int_\R A(t,x_\bot+s\omega)\cdot\omega ds}\right)\left(1- e^{i\int_\R A(t,x_\bot+s\omega)\cdot\omega ds}\right)dx_\bot \\
& = & \int_{\omega^\bot} \nabla \cdot \left( y e^{-i\xi\cdot x_\bot}e^{-i\int_\R A(t,x_\bot+s\omega)\cdot\omega ds}\right) \left(1- e^{i\int_\R A(t,x_\bot+s\omega)\cdot\omega ds}\right)dx_\bot \\
& = & -i \int_{\omega^\bot} e^{-i\xi\cdot x_\bot} y \cdot \nabla \left( \int_\R  A(t,x_\bot+s\omega)\cdot\omega ds \right)
d x_\bot,
\eeas
by integrating by parts, \eqref{q1} and the Fubini theorem entail
\beas
& &  \int_{\R^{1+n}} \chi^2(t) \beta(t,x) e^{i\int_0^{+\infty}A(t,x+s\omega)\cdot\omega ds} A(t,x)\cdot\omega dxdt\\
& = & \int_\R \chi^2(t) e^{-it\tau}\left(\int_{\omega^\bot}\left(\int_\R y\cdot \nabla (A(t,x_\bot+s\omega)\cdot\omega) ds\right)e^{-i\xi\cdot x_\bot}dx_\bot\right)dt\\
& = &  \int_\R\chi^2(t)e^{-it\tau} \left( \int_{\R^n} e^{-i\xi\cdot x} y \cdot \nabla (A(t,x)\cdot\omega)  dx\right)dt\\
& = &  \int_{\R^{n+1}}  e^{-i(t\tau + x \cdot \xi)} \chi^2(t)  y \cdot  \nabla (A(t,x) \cdot\omega) dxdt.
\eeas
Therefore, taking $y=\frac{\xi}{|\xi|}$ and applying Stokes formula to the above integral, we obtain
\beas
& & i \int_{\R^{1+n}} \chi^2(t) \beta(t,x) e^{i\int_0^{+\infty} A(t,x+s\omega)\cdot\omega ds} A(t,x)\cdot\omega dxdt \\
&=& -|\xi| \int_{\R^{n+1}} e^{-i(t\tau+ x \cdot \xi)}\chi^2(t) A(t,x)\cdot\omega dxdt\\
&= & - (2\pi)^{\frac{n+1}{2}} |\xi|\widehat{\chi^2A}(\tau,\xi)\cdot\omega,
\eeas
which yields
\bel{l3b}
|\xi| \abs{\widehat{\chi^2A}(\tau,\xi)\cdot\omega} \leq 
C \langle \tau,\xi \rangle^6 \delta^{-8} \left( \norm{\Lambda_{A_2,q_2}-\Lambda_{A_2,q_2}}\sigma^5 + \langle \tau,\xi \rangle ^2 \delta^{2}\sigma^{-1} \right),
\ee
in virtue of \eqref{l2a}.

Further, since $\nabla \cdot A=0$ in $Q$, by \eqref{cond2}, then we have $\widehat{\chi^2A}(\tau,\xi)\cdot\xi=0$ by direct calculation and hence
$$ \widehat{\chi^2A}(\tau,\xi)=\sum_{k=1}^{n-1} \left( \widehat{\chi^2A}(\tau,\xi)\cdot e_k \right) e_k, $$
for any orthonormal basis $\{ e_1,\ldots,e_{n-1} \}$ of $\xi^\bot$.
Finally, \eqref{l3a} follows directly from this upon applying \eqref{l3b} with $\omega=e_k$ for $k=1,\ldots,n-1$.
\end{proof}

Having established Lemma \ref{l3}, we are now in position to derive the stability estimate \eqref{t1a}.

\subsection{Completion of the proof}
\label{sec-completion}

We start by estimating $\norm{\chi^2A}_{L^2(0,T; H^5(\Omega))^n}$. Recalling from \eqref{cond1} that $\chi^2A$ is supported in $(\delta,T-\delta)\times\overline{\Omega}$, we see that
$$
\norm{\chi^2A}_{L^2(0,T; H^5(\Omega))^n}^2 =  \int_{\R^{1+n}} \langle \xi \rangle^{10} |\widehat{\chi^2A}(\tau,\xi)|^2d\xi d\tau,$$
where, as usual, $\langle \xi \rangle$ denotes $\left( 1+ | \xi |^2 \right)^{\frac{1}{2}}$.
Next, for $R \in (1,+\infty)$ fixed, we put $B_R :=\{(\tau,\xi)\in\R^{1+n};\ |(\tau,\xi)|<R\}$, use that $\chi^2A\in H^1(\R; H^5(\R^n))^n\cap L^2(\R; H^6(\R^n))^n$, and obtain
\beas
\int_{\R^{1+n}\setminus B_R} \langle \xi \rangle^{10} |\widehat{\chi^2A}(\tau,\xi)|^2d\xi d\tau
& \leq & R^{-2} \int_{\R^{1+n}\setminus B_R} | (\xi,\tau) |^{2} \langle \xi \rangle^{10} |\widehat{\chi^2A}(\tau,\xi)|^2d\xi d\tau \nonumber \\
&\leq &R^{-2} \left( \norm{\chi^2A}_{H^1(\R; H^5(\R^n))^n}^2+\norm{\chi^2A}_{ L^2(\R; H^6(\R^n))^n}^2 \right).
\eeas
Therefore, we have
\bel{p2ba}
\int_{\R^{1+n}\setminus B_R} \langle \xi \rangle^{10} |\widehat{\chi^2A}(\tau,\xi)|^2d\xi d\tau \leq  CR^{-2}\delta^{-2}\norm{A}_{H^6(Q)^n}^2 \leq CR^{-2}\delta^{-2}, 
\ee
where $C$ is a generic positive constant depending only on $\Omega$, $T$ and $M$, which may change from line to line.
Further, setting $E_R:=\{(\tau,\xi)\in\R^{1+n};\ |\xi|\leq R^{-\frac{3}{n}}\}$, we get
\beas
\int_{B_R\cap E_R} \langle \xi \rangle^{10} |\widehat{\chi^2A}(\tau,\xi)|^2d\xi d\tau
&\leq & \int_{-R}^R \int_{| \xi | \leq R^{-\frac{3}{n}}} \langle \xi \rangle^{10} |\widehat{\chi^2A}(\tau,\xi)|^2d\xi d\tau\\
& \leq & (1+R^{-\frac{3}{n}})^{10} \norm{\widehat{\chi^2A}}_{L^\infty(\R^{1+n})^n}^2 \left( \int_{-R}^R \int_{| \xi | \leq R^{-\frac{3}{n}}} d\xi d\tau \right)
\eeas
and hence
$$
\int_{B_R \cap E_R} \langle \xi \rangle^{10} |\widehat{\chi^2A}(\tau,\xi)|^2d\xi d\tau \leq (2 \pi)^{-(n+1)} 2^{11} T^2 | \Omega |^2 \norm{A}_{L^\infty(Q)^n}^2 R^{-2}.
$$
This and \eqref{p2ba} yield
\bel{p2b} 
\int_{\left( \R^{1+n}\setminus B_R \right) \cup \left( B_R\cap E_R \right)} \langle \xi \rangle^{10} |\widehat{\chi^2A}(\tau,\xi)|^2 d\xi d\tau\leq CR^{-2}\delta^{-2}.
\ee

On the other hand, putting $\epsilon:=\norm{\Lambda_{A_1,q_1}-\Lambda_{A_2,q_2}}$, we derive from \eqref{l3a} for all $(\tau,\xi)\in B_R\setminus E_R$, that 
\beas
|\widehat{\chi^2A}(\tau,\xi)|^2 & \leq & C(  R^{{6\over n}+12}\delta^{-16}\sigma^{10}\epsilon^2+R^{{6\over n}+16}\delta^{-12}\sigma^{-2}) \\
& \leq & C(  R^{15}\delta^{-16}\sigma^{10} \epsilon^2+R^{19}\delta^{-12}\sigma^{-2})
\eeas
which involves
$$\int_{B_R\setminus E_R} \langle \xi \rangle^{10} | \widehat{\chi^2A}(\tau,\xi)|^2d\xi d\tau\leq C( R^{26+n}\delta^{-16}\sigma^{10}\epsilon^2+R^{30+n}\delta^{-12}\sigma^{-2}).$$
It follows from this and \eqref{p2b} that
\bel{p2c}
\norm{\chi^2A}_{L^2(0,T; H^5(\Omega))^n}^2\leq C( R^{26+n}\delta^{-16}\sigma^{10}\epsilon^2+R^{30+n}\delta^{-12}\sigma^{-2}+R^{-2}\delta^{-2}).
\ee

Further, by noticing that
$\norm{A-\chi^2A}_{L^2(0,T; H^5(\Omega))^n}\leq \norm{1-\chi^2}_{L^2(0,T)} \norm{A}_{W^{5,\infty}(Q)}$ 
and taking advantage of the fact that the $[0,1]$-valued function $1-\chi$ vanishes in $[2\delta,T-2\delta]$, we get that
$\norm{1-\chi^2}_{L^2(0,T)}^2=\int_0^{2\delta}(1-\chi^2(t))^2dt+\int_{T-2\delta}^{T}(1-\chi^2(t))^2dt \leq 4  \delta$. This entails
$\norm{A-\chi^2A}_{L^2(0,T; H^5(\Omega))^n}^2\leq 4 \norm{A}_{W^{5,\infty}(Q)}^2 \delta$ and consequently 
\bel{p2d}
\norm{A}_{L^2(0,T; H^5(\Omega))^n}^2\leq C \left(  \epsilon^2 R^{26+n}\delta^{-16}\sigma^{10}+R^{30+n}\delta^{-12}\sigma^{-2}+R^{-2}\delta^{-2}+\delta \right),
\ee
by invoking \eqref{p2c}. Now, the strategy is to choose $\delta$ as a power of $R$ so that $R^{-2}\delta^{-2}=\delta$, i.e. $\delta=R^{-\frac{2}{3}}$, and to do the same with $\sigma$, that is to take $\sigma=R^{\frac{116+3n}{6}}$, in such a way that the three last terms in the right hand side of \eqref{p2d} are equal to $R^{-\frac{2}{3}}$. Evidently, as we have $\delta \in \left( 0, \frac{T}{4} \right)$, by assumption, this requires that $R$ be fixed  in $\left( \left( \frac{T}{4} \right)^{-\frac{3}{2}}, \infty \right)$. Summing up, we infer from \eqref{p2d} that
\bel{p2e}
\norm{A}_{L^2(0,T; H^5(\Omega))^n}^2 \leq C \left( R^{230+6n} \epsilon^2 +R^{-\frac{2}{3}} \right).
\ee
Therefore, we get \eqref{t1a} with $r:=\frac{1}{346+9n}$ for all $\epsilon \in ( 0, \epsilon_r)$, where $\epsilon_r:=\left( \frac{T}{4} \right)^{\frac{1}{2r}}$, upon choosing $R=\epsilon^{-3 r}$ in \eqref{p2e}, whereas
$\norm{A}_{L^2(0,T; H^5(\Omega))^n}\leq \frac{2M}{\epsilon_r^r} \epsilon^r$ for all $\epsilon \in [\epsilon_r,+\infty)$. This achieves the proof of \eqref{t1a}.

\section{Proof of the stability estimate \eqref{t1b}}
\setcounter{equation}{0}
\label{sec-seel}

Here we use the definitions and notations introduced in Sections \ref{sec-GO} and \ref{sec-sema}, unless for the function $\beta$, which is no longer given by \eqref{GO8} but is rather defined by 
$$ \beta(t,x):=e^{-i(t \tau+ x\cdot\xi)},\ (t,x) \in (0,T) \times \R^n. $$
As this definition formally coincides with \eqref{GO8} in the particular case where $A$ is uniformly zero and $y=i\xi$, it is apparent that the estimates derived in Sections \ref{sec-GO} and \ref{sec-sema} remain valid with this specific choice of $\beta$.

Thus, in light of \eqref{GO101}--\eqref{GO112} and \eqref{l2c}-\eqref{l2d}, it holds true that
\bea
& & \abs{\int_{\R^{1+n}}V u_{1,1}\overline{u_{2,1}} (t,x)dxdt} \nonumber \\
& \leq & C \left( \norm{A}_{L^\infty(Q)^n} \langle \tau,\xi \rangle^8 \delta^{-6}\sigma +\norm{\Lambda_{A_2,q_2}-\Lambda_{A_2,q_2}}\langle \tau,\xi \rangle^6 \delta^{-8} \sigma^6+ \langle \tau,\xi \rangle^6 \delta^{-4} \sigma^{-1} \right). \label{t1d}
\eea
Next, from the very definition of $V$, we have
\beas
\int_{\R^{1+n}} Vu_{1,1}\overline{u_{2,1}}(t,x)dxdt
& =& \int_{\R^{1+n}} q u_{1,1}\overline{u_{2,1}}(t,x)dxdt - i \int_{\R^{1+n}}
A \cdot \nabla ( u_{1,1} \overline{u_{2,1}})(t,x)dxdt \\
& & -\int_{\R^{1+n}} A\cdot (A_1+A_2)u_{1,1}\overline{u_{2,1}}(t,x)dxdt,
\eeas
by applying the Stokes formula. This, \eqref{GO101}--\eqref{GO112} and \eqref{t1d} yield
\bea 
& & \abs{\int_{\R^{1+n}}qu_{1,1}\overline{u_{2,1}}(t,x)dxdt} \\
& \leq & C \left( \norm{A}_{L^\infty(Q)^n} \langle \tau,\xi \rangle^8 \delta^{-6}\sigma +\norm{\Lambda_{A_2,q_2}-\Lambda_{A_2,q_2}}\langle \tau,\xi \rangle^6 \delta^{-8} \sigma^6 + \langle \tau,\xi \rangle^6 \delta^{-4} \sigma^{-1} \right). \label{t1e}
\eea
On the other hand, we have
$$\int_{\R^{1+n}}q u_{11}\overline{u_{21}}(t,x) dxdt=\int_{\R^{1+n}} \chi^2(t) q(t,x) e^{-i(t\tau+x\cdot\xi)}e^{i\int_0^{+\infty} A(t,x+s\omega)\cdot\omega ds}dxdt, $$
whence
\beas
& & \abs{\int_{\R^{1+n}} \chi^2(t) q(t,x) e^{-i(t\tau+x\cdot\xi)}dxdt} \nonumber \\
&\leq & \abs{\int_{\R^{1+n}}q u_{11}\overline{u_{21}}(t,x)dxdt} +\abs{\int_{\R^{1+n}} q(t,x) e^{-i(t\tau+x\cdot\xi)}\left(e^{i\int_0^{+\infty} A(t,x+s\omega)\cdot\omega ds}-1\right)dxdt}.
\eeas
Thus, applying the mean value theorem, we get that 
$$ \abs{\int_{\R^{1+n}} \chi^2(t) q(t,x) e^{-i(t\tau+x\cdot\xi)}dxdt} \leq
\abs{\int_{\R^{1+n}}qu_{11}\overline{u_{21}}dxdt}+ C\norm{A}_{L^\infty(Q)^n}.
$$
Plugging \eqref{t1e} into the above estimate, we find that
\bel{t1f}
\abs{\widehat{\chi^2 q}(\tau,\xi)} \leq C \left( \norm{A}_{L^\infty(Q)^n}  \langle \tau,\xi \rangle^8 \delta^{-6}\sigma+\norm{\Lambda_{A_2,q_2}-\Lambda_{A_2,q_2}} \langle \tau,\xi \rangle^6  \delta^{-8} \sigma^6 + 
\langle \tau,\xi \rangle^6 \delta^{-4} \sigma^{-1} \right).
\ee
The next step of the proof is to upper bound $\norm{A}_{L^\infty(Q)^n}$ in terms of $\norm{\Lambda_{A_1,q_1}-\Lambda_{A_2,q_2}}$. To this end, we pick $p \in (n+1,+\infty)$, apply the Sobolev embedding theorem (see e.g. \cite[Corollary IX.14]{Br}) and find that $\norm{A}_{L^\infty(Q)^3}\leq C \norm{A}_{W^{1,p}(Q)^n}$. Interpolating, we obtain 
$$ \norm{A}_{L^\infty(Q)^n}\leq C \norm{A}_{W^{2,p}(Q)^n}^{1 \slash 2} \norm{A}_{L^p(Q)^n}^{1 \slash 2}\leq C \norm{A}_{L^p(Q)^n}^{1 \slash 2} \leq C\norm{A}_{L^2(Q)^n}^{1 \slash p}
$$
and hence
$$ \norm{A}_{L^\infty(Q)^n} \leq C\norm{\Lambda_{A_1,q_1}-\Lambda_{A_2,q_2}}^{r \slash p}, $$
with the help of \eqref{t1a}. Inserting the above estimate into \eqref{t1f} then yields
$$
\abs{\widehat{\chi^2 q}(\tau,\xi)} 
\leq C \left( \norm{\Lambda_{A_1,q_1}-\Lambda_{A_2,q_2}}^{r \slash p}  \langle \tau,\xi \rangle^8 \delta^{-6}\sigma +\norm{\Lambda_{A_2,q_2}-\Lambda_{A_2,q_2}} \langle \tau,\xi \rangle^6 \delta^{-8} + \langle \tau,\xi \rangle^6 \delta^{-4} \sigma^{-1} \right)
$$
and \eqref{t1b} follows from this by arguing in the same way as in the derivation of \eqref{t1a} from \eqref{p2d}.



\end{document}